\documentclass[11pt]{article}
\usepackage[margin=1in]{geometry}
\usepackage{algorithm}
\usepackage{algpseudocode}
\usepackage{url} 

\providecommand{\headers}[2]{}

\let\oldtitle\title
\renewcommand{\title}[2][]{\oldtitle{#2}}

\makeatletter
\newcommand{\LoadPackageUnlessAcm}[2][]{%
    \@ifclassloaded{acmart}{%
        \PackageInfo{preamble}{Skipping package `#2' because acmart class is used}%
    }{%
        \if\relax\detokenize{#1}\relax
            \usepackage{#2}%
        \else
            \usepackage[#1]{#2}%
        \fi
    }%
}
\makeatother

\LoadPackageUnlessAcm{caption}
\LoadPackageUnlessAcm{subcaption}
\LoadPackageUnlessAcm{authblk}
\LoadPackageUnlessAcm{hyperref}

\usepackage{amstext}
\usepackage{stmaryrd}
\usepackage{algpseudocode}

\usepackage{units}
\usepackage{cancel}
\usepackage{graphicx}

\usepackage{xcolor}
\usepackage{array}
\usepackage{verbatim}
\usepackage{booktabs}
\usepackage{makecell}
\usepackage{calc}
\usepackage{textcomp}
\usepackage{multirow}
\usepackage{tikz,tikzscale}

\usepackage{pgfplots}
\usepackage{pgfplotstable}
\usepackage{pgfkeys}
\pgfplotsset{compat=1.18}

\definecolor{deepgreen}{RGB}{0,100,0}

\newif\ifrunscripts
\runscriptstrue
\pgfplotsset{
    p2/.style={teal, mark=*},
    p3/.style={orange, mark=triangle*},
    p7/.style={blue, mark=square*},
}

\newcommand{\plotwidth}{0.49\columnwidth}
\newcommand{\plotheight}{0.4\columnwidth}

\pgfplotscreateplotcyclelist{paper markers}{
    {teal,    solid, mark=*},
    {orange,   solid, mark=triangle*},
    {blue,     solid, mark=square*},
    {red,      solid, mark=pentagon*},
    {violet,     solid, mark=otimes*},
    {brown,    solid, mark=halfcircle*},
    {black,   solid, mark=diamond*},
}

\colorlet{BarColorOne}{orange!60!white}
\colorlet{BarColorTwo}{teal!80!white}
\colorlet{BarColorThree}{red!80!black}
\colorlet{BarColorFour}{blue!60!white}
\colorlet{BarColorFive}{brown!70!black}

\pgfplotsset{legend swatch/.style={area legend, draw=none}}

\pgfplotsset{
    paperplot/.style={
            width=\plotwidth,
            height=\plotheight,
            cycle list name=paper markers,
            every axis/.append style={font=\small},
            grid=major,
            thick
        }
}

\pgfplotsset{
    longplot/.style={
            width=0.8\columnwidth,
            height=\plotheight,
            cycle list name=paper markers,
            every axis/.append style={font=\small},
            grid=major,
            thick
        }
}

\usepackage[all]{xy}
\usepackage{mathtools}
\usepackage{placeins}
\usetikzlibrary{calc}
\usetikzlibrary{arrows.meta}
\usetikzlibrary{shadings}
\usepgfplotslibrary{fillbetween}

\usepackage[mode=multiuser,status=draft]{fixme}
\fxsetup{innerlayout=inline}
\fxsetup{targetlayout=colorcb}
\fxusetheme{color}
\FXRegisterAuthor{mw}{envmw}{MW}

\definecolor{apply}{rgb}{0.3,.7,0.}
\definecolor{applyface}{rgb}{.9,.95,0.}
\definecolor{invert}{rgb} {0.5,0.,1.}
\colorlet{invertface}{invert!60!red}

\usepackage[colorinlistoftodos,prependcaption,textsize=small]{todonotes}

\usepackage{colortbl}

\usepackage{booktabs}
\usepackage[font=footnotesize,position=b]{subcaption}
\usepackage{adjustbox}

\makeatletter
\def\pgfplots@getautoplotspec into#1{%
        \begingroup
        \let#1=\pgfutil@empty
        \pgfkeysgetvalue{/pgfplots/cycle multi list/@dim}\pgfplots@cycle@dim
        \let\pgfplots@listindex=\pgfplots@numplots
        \pgfkeysgetvalue{/pgfplots/cycle list set}\pgfplots@listindex@set
        \ifx\pgfplots@listindex@set\pgfutil@empty
        \else
            \c@pgf@counta=\pgfplots@listindex
            \c@pgf@countb=\pgfplots@listindex@set
            \advance\c@pgf@countb by -\c@pgf@counta
            \globaldefs=1\relax
            \edef\setshift{%
                \noexpand\pgfkeys{
                    /pgfplots/cycle list shift=\the\c@pgf@countb,
                    /pgfplots/cycle list set=
                }
            }%
            \setshift%
            \globaldefs=0\relax
        \fi
        \pgfkeysgetvalue{/pgfplots/cycle list shift}\pgfplots@listindex@shift
        \ifx\pgfplots@listindex@shift\pgfutil@empty
        \else
            \c@pgf@counta=\pgfplots@listindex\relax
            \advance\c@pgf@counta by\pgfplots@listindex@shift\relax
            \ifnum\c@pgf@counta<0
                \c@pgf@counta=-\c@pgf@counta
            \fi
            \edef\pgfplots@listindex{\the\c@pgf@counta}%
        \fi
        \ifnum\pgfplots@cycle@dim>0
            \c@pgf@counta=\pgfplots@cycle@dim\relax
            \c@pgf@countb=\pgfplots@listindex\relax
            \advance\c@pgf@counta by-1
            \pgfplotsloop{%
                \ifnum\c@pgf@counta<0
                    \pgfplotsloopcontinuefalse
                \else
                    \pgfplotsloopcontinuetrue
                \fi
            }{%
                \pgfkeysgetvalue{/pgfplots/cycle multi list/@N\the\c@pgf@counta}\pgfplots@cycle@N
                \pgfplotsmathmodint{\c@pgf@countb}{\pgfplots@cycle@N}%
                \divide\c@pgf@countb by \pgfplots@cycle@N\relax
                \expandafter\pgfplots@getautoplotspec@
                \csname pgfp@cyclist@/pgfplots/cycle multi list/@list\the\c@pgf@counta @\endcsname
                {\pgfplots@cycle@N}%
                {\pgfmathresult}%
                \t@pgfplots@toka=\expandafter{#1,}%
                \t@pgfplots@tokb=\expandafter{\pgfplotsretval}%
                \edef#1{\the\t@pgfplots@toka\the\t@pgfplots@tokb}%
                \advance\c@pgf@counta by-1
            }%
        \else
            \pgfplotslistsize\autoplotspeclist\to\c@pgf@countd

            \pgfplots@getautoplotspec@{\autoplotspeclist}{\c@pgf@countd}{\pgfplots@listindex}%
            \let#1=\pgfplotsretval
        \fi
        \pgfmath@smuggleone#1%
        \endgroup
    }

\pgfplotsset{
    cycle list set/.initial=
}
\makeatother

\usepackage{etoolbox}
\AtBeginEnvironment{tabular}{\footnotesize}
\AtBeginEnvironment{tabular*}{\footnotesize}
\AtBeginEnvironment{tabularx}{\footnotesize}

\usepackage{lipsum}
\usepackage{amsfonts}
\usepackage{graphicx}
\usepackage{epstopdf}
\usepackage{amsmath,amssymb,amsthm}
\newtheorem{lemma}{Lemma}
\newtheorem{theorem}{Theorem}

\theoremstyle{definition}

\theoremstyle{remark}

\theoremstyle{plain}
\ifpdf
    \DeclareGraphicsExtensions{.eps,.pdf,.png,.jpg}
\else
    \DeclareGraphicsExtensions{.eps}
\fi

\newcommand{\secref}[1]{Section~\ref{#1}}

\headers{Matrix-Free Finite Elements in Cell-Wise Storage}{Wichrowski}

\title{Coalesced Matrix-Free Finite Elements in Cell-Wise Storage}
\author{Michał Wichrowski}

\usepackage{amsopn}

\providecommand{\keywordsname}{Keywords}
\newcommand{\Keywords}[1]{\par\noindent\textbf{\keywordsname:} #1}

\title{Adaptive Multilevel Discontinuous Galerkin Methods on GPUs}

\begin{document}
\author{Michał Wichrowski$^0$}
\footnotetext{
    % Interdisziplinäres Zentrum für Wissenschaftliches Rechnen (IWR), Ruprecht-Karls-Universität Heidelberg, Germany,
    \texttt{mwichro@mimuw.edu.pl}}
\date{}

\maketitle

\begin{abstract}
    %!TEX root = ../main_dg_shadow.tex

% Summarize the shadow-cell method and its properties.
I present a matrix-free symmetric interior penalty discontinuous Galerkin method for adaptively refined Cartesian
meshes on GPUs.
At non-matching interfaces, auxiliary \emph{shadow cells} represent the adjacent coarse polynomial on the fine level.
This approach converts non-matching interfaces into matching faces, permitting uniform face evaluation throughout the
mesh. I prove that this construction is equivalent to the standard non-matching formulation. Furthermore, this
representation yields a local geometric multigrid method in which frozen shadows provide inter-level boundary data and
carry residual contributions to coarser levels. A primal--dual pairing eliminates shadow assembly from the Krylov
iteration. Numerical experiments with cubic elements ($p=3$) on an NVIDIA A100 show stable multigrid convergence under
increasing refinement depth and efficient GPU execution.

\end{abstract}

\Keywords{geometric multigrid, adaptive mesh refinement, hanging nodes, matrix-free, finite elements, GPU}

% \AMS{65N55, 65N30, 65F08, 65Y10, 65Y20}

%%%%%%%%%%%%%%%%%%%%%%%%
% Introduction to the DG method.
%%%%%%%%%%%%%%%%%%%%%%%%
\section{Introduction}
\label{sec:introduction}
%% TODO: write the introduction. Selling points to lead with:
%%  1. hanging-node constraints no longer needed because they are embedded in the MG structure;
%%  2. novel realization of edge operators (dual-side masking, no A^SE/A^ES blocks);
%%  3. DSS works unchanged on adaptively refined meshes;
%%  4. novel transfers: valence weighting removed entirely (restrict the unassembled dual vector).
%% Headline number: at p=3, masked point-Jacobi MG matches a patch-smoother solver in solve time (fp64 vs fp64).

%!TEX root = ../main_dg_shadow.tex

%%%%%%%%%%%%%%%%%%%%%%%%
% Motivation: matrix-free DG on GPUs, and where the cost actually sits
%%%%%%%%%%%%%%%%%%%%%%%%
% Paragraph: DG and matrix-free evaluation match the data-movement constraints of GPUs.
Discontinuous Galerkin (DG) methods combine high-order accuracy with a discrete structure well suited to GPUs: all
degrees of freedom are element-local, the mass matrix is block-diagonal, and the operator action decomposes into dense,
sum-factorized element kernels~\cite{Kronbichler2017a,bastian2019matrix,fehn2019matrix}. Matrix-free evaluation of DG
operators is correspondingly mature, and sum-factorized kernels can approach the available device bandwidth when their
memory accesses are regular~\cite{Kronbichler2017a,kronbichler2019multigrid,fischer2020scalability}. Because the
floating-point throughput of processors, CPUs and accelerators alike, has
grown faster than their memory bandwidth~\cite{williams2009roofline}, performance is governed primarily by the number
of bytes moved and the regularity with which they are moved.

% Paragraph: adaptive local multigrid concentrates irregular work at refinement edges.
Local multigrid exploits the refinement hierarchy by decomposing an adaptive mesh into levels and smoothing only the
locally refined region on each level; its work therefore follows the adaptive hierarchy rather than a sequence of
globally refined meshes~\cite{brandt1977mlat,mccormick1986fac,JanssenKanschat11}. The operator on each level must
nevertheless account for coupling across its refinement edge. Evaluating the level residual therefore requires
integrating the faces on this edge: level prolongation supplies the coarse-side traces, while level restriction returns the
resulting coarse-side residual contributions to the coarser level. Classical local-smoothing formulations expose this
coupling through separate interior and edge operators and constrained inter-grid transfers~\cite{JanssenKanschat11}.

% Paragraph: state of the art in DG multigrid, GPU realizations, and current treatments of adaptivity.
Fast solvers for interior penalty discretizations are by now well developed. Multilevel methods for DG were introduced
and analyzed in~\cite{gopalakrishnan2003multilevel,brenner2005convergence} and extended to $hp$ hierarchies
in~\cite{antonietti2015multigrid}; with block smoothers that respect the element-local structure of the broken
space, such as block Jacobi~\cite{pazner2018approximate} and block Gauss--Seidel~\cite{Kanschat08smoother},
overlapping Schwarz, and patch
smoothers~\cite{WitteArndtKanschat21,cui2025implementation,wichrowski2025local}, they yield level-independent
convergence for the symmetric interior penalty operator. Where a geometric hierarchy is unavailable, the coarse spaces
are built by agglomeration~\cite{antonietti2020agglomeration},
algebraically~\cite{prill2009smoothed,bastian2012algebraic}, or by degree coarsening to a low-order or continuous space,
which is the basis of the hybrid multigrid solvers used in
matrix-free frameworks~\cite{kronbichler2018performance,fehn2020hybrid,bastian2019matrix}. On accelerators these
methods are almost always realized matrix-free, since storing and streaming assembled DG matrices would incur an excessive memory footprint and bandwidth requirement: nodal DG
kernels were mapped to GPUs in~\cite{kloeckner2009nodal}, and sum-factorized, vectorized DG operators with multigrid
preconditioners are available in production frameworks~\cite{Kronbichler2017a,muthing2017high,bastian2019matrix,
    fehn2019matrix,ljungkvist2017matrix,kronbichler2019multigrid}. Closest to the present work,
Cui and Kanschat~\cite{cui2025multilevel} study the data and compute layouts of a matrix-free multigrid solver for the
interior penalty operator on GPUs, with fast-diagonalized tensor-product smoothers and mixed precision, on uniformly
refined meshes; we use their reported solver throughput as a reference point in \secref{sec:results}.

% Paragraph: what these approaches do at the refinement edge, and what is therefore still open.
Adaptivity is where the picture becomes less uniform. Global multigrid on a sequence of uniformly refined meshes is the
simplest option, but on a locally refined mesh its levels carry the cost of the globally refined hierarchy rather than
of the refined region. Local smoothing avoids this by restricting each level to its own
cells~\cite{brandt1977mlat,mccormick1986fac,JanssenKanschat11} and is implemented in this form in parallel adaptive
frameworks~\cite{clevenger2021flexible,kronbichler2019multigrid,ljungkvist2017matrix}; the price is a level operator
split into interior and edge parts, with dedicated edge kernels and constrained transfers that realize the inter-level
boundary condition~\cite{JanssenKanschat11}. Independently of the solver, the discretization itself must integrate the
non-matching faces of the adaptive mesh. Matrix-free DG implementations do so with a second family of face kernels
using subface quadrature and per-subface interpolation~\cite{Kronbichler2017a}, while the spectral-element literature
routes the same coupling through mortar spaces on the interface~\cite{kopriva1996conservative,kopriva2002mortar}. Both
remedies introduce, at the refinement edge, precisely the irregular addressing and divergent execution paths that the
matrix-free DG kernels were designed to avoid, and they do so in the operator, the smoother, and the level transfers
alike. The present work eliminates this separate non-matching treatment altogether: the refinement edge is expressed in the same
matching-face kernels as the rest of the mesh, and the local multigrid cycle inherits that uniformity.

%%%%%%%%%%%%%%%%%%%%%%%%
% Our approach
%%%%%%%%%%%%%%%%%%%%%%%%

% Paragraph: contribution 1 replaces the face list with face exchanges.
Our first contribution (\secref{sec:faces}) is the organization of the face integrals as \emph{structured face
exchanges}. Within a structured block, the faces normal to each axis are enumerated arithmetically and processed by one
dimensionally-split sweep per axis: each thread owns one interface, extracts the two coincident face fragments at
statically known offsets,
evaluates the numerical flux, and writes one contribution back to each side. Neighbor access becomes a direct load
instead of an indirect gather, the two-sided write is race-free without atomics or coloring, and indirection survives
only on the skeleton between macro-blocks.

% Paragraph: contribution 2 introduces shadow cells for non-matching faces.
Our second and main contribution is the treatment of adaptive refinement by \emph{shadow
cells} (\secref{sec:shadow}). Instead of integrating over subfaces, we artificially refine every cell on
the coarse side of a refinement edge (under the standard 2:1 balance assumption) and allocate its children in storage as
shadows: cells that are structurally identical to active cells but carry no independent unknowns, their content always
derived from the
parent by polynomial prolongation. Every face of every fine cell then finds a matching partner at its own level, and
the face loop runs uniformly: refresh the shadows, sweep over matching faces including the fine-shadow ones, and
accumulate the shadow contributions onto the parent via transposed prolongation (shadow restriction). We prove that this reproduces the non-matching face
integrals and quadrature of the standard interior penalty method; the construction reorganizes the computation without
introducing an approximation.

% Paragraph: contribution 3 turns shadow redundancy into a saving through primal-dual pairings.
Shadow storage is redundant, and our third contribution (\secref{sec:dg_pairing}) exploits this redundancy within a
primal--dual framework. Following the primal--dual formulation for redundant storage in continuous
elements~\cite{wichrowski2026DSS},
we pair a consistent primal vector with an unassembled dual one, and show that the blockwise pairing over active and
shadow blocks already
equals the assembled active-mesh pairing. Conjugate gradients therefore run on split data: explicit shadow assembly is eliminated
from every operator application within the Krylov iteration, vector updates preserve consistency of shadow values, and
the only inter-level transfers are those inherent to the multigrid cycle. Consequently, solver convergence is monitored
via the duality pairing rather than the standard Euclidean residual norm.

% Paragraph: contribution 4 uses local multigrid with frozen shadows at the refinement edge.
Our fourth contribution (\secref{sec:local_mg}) is a geometric multigrid preconditioner with local
smoothing~\cite{brandt1977mlat,
    mccormick1986fac,JanssenKanschat11}, following the cell-wise matrix-free V-cycle
of~\cite{wichrowski2026MG}, in which the shadow cells double as the inter-level boundary data. During the
smoothing sweeps on a given level, the solution on the unrefined side of the refinement edge does not change because it is
corrected by coarser levels of the V-cycle. We therefore refresh the coarse-side shadow values once per
level visit; subsequent smoothing sweeps and residual evaluations use the uniform face loop with frozen shadow data.
Because shadows are not relaxed, this enforces the inter-level Dirichlet condition of the classical local-smoothing
formulation. The residual evaluation accumulates outward fluxes on the shadows, and shadow assembly routes them to the
coarser level. Thus the refinement edge uses the shadow-embedding, face-exchange, and shadow-assembly kernels.

% Paragraph: computational overhead and scope.
The costs of the approach are explicit and local: a shadow layer of $2^d$ cells per coarse cell along each refinement
edge, giving surface-order storage and work, and one shadow refresh and one shadow assembly per level visit, not per operator
application (\secref{sec:dg_pairing}). In exchange, the solver contains no non-matching
quadrature, no hanging-node bookkeeping, no atomics, and no indirect addressing outside the macro-block skeleton. On the
A100 the face kernel reaches $81\%$ of peak memory bandwidth, and at four levels of local refinement the operator
retains $71.5\%$ of its uniform-mesh throughput while the iteration count stays constant. The detailed analysis of the
kernels, the storage layout, and the behavior under adaptive refinement is presented in \secref{sec:results}.

% Paragraph: the implementation vehicle.
The entire solver is implemented in Triton~\cite{triton2019}, a tile-based GPU language originating in the
machine-learning community, as are the companion solvers of~\cite{wichrowski2026DSS,wichrowski2026MG}. Its programming
model operates on dense, power-of-two tiles and matches the blocked cell-wise layout used here: the compiler handles
coalescing, shared-memory bank conflicts, and the mapping of tile-level contractions onto the FP64 tensor cores, which
would otherwise require hand-tuned CUDA. Because its compiler emits both PTX and its AMD counterpart, the same kernel
source is not tied to one vendor.

% Paragraph: additionally investigate the blocked cell-wise layout as a performance parameter.
Additionally, we investigate the blocked cell-wise storage layout of \secref{sec:dg_cellwise} as a performance
parameter. Such layouts were
previously studied for continuous elements in~\cite{wichrowski2026DSS}. For DG methods, element-local storage is
already the native representation of the broken space, and no global numbering constraint fixes the memory addresses of
the degrees of freedom. This freedom allows the persistent data layout to be decoupled from the mesh topology. We
benchmark the blocked $[B,\,n_x,\,n_y,\,n_z,\,n_e]$ layouts for the interior penalty operator in
\secref{sec:dg_results_face_kernel}.

%!TEX root = ../main_dg_shadow.tex

% DG jump/average notation (local to this paper; move to preamble if reused)
\providecommand{\jump}[1]{[\![#1]\!]}
\providecommand{\avg}[1]{\{\!\!\{#1\}\!\!\}}

%%%%%%%%%%%%%%%%%%%%%%%%
% DG discretization and blocked storage
%%%%%%%%%%%%%%%%%%%%%%%%
\section{Discontinuous Galerkin Discretization}
\label{sec:dg_cellwise}

% Paragraph: setting and model problem.
We consider the Poisson model problem on a bounded domain $\Omega \subset \mathbb{R}^d$, $d \in \{2,3\}$, with
homogeneous Dirichlet boundary conditions, discretized by the symmetric interior penalty discontinuous Galerkin (SIPG)
method~\cite{arnold1982interior,arnold2002unified} on a \emph{Cartesian} mesh $\mathcal{T}$ of axis-aligned
quadrilateral or hexahedral cells with tensor-product polynomials $\mathbb{Q}_p$. The mesh originates from a macro-grid
of axis-aligned blocks whose cells are subdivided into structured sub-blocks; the computational grid is obtained by
hierarchical refinement, either uniform or adaptive. Since refinement by bisection preserves axis alignment, every cell
is a scaled translate of the reference cube, and its geometry reduces to one scaling factor per refinement level. The
formulation also applies to symmetric second-order elliptic operators with cell-wise constant coefficients. The method
and its kernels are formulated for general degree $p$, but the implementation and every measurement reported in
\secref{sec:results} use cubic elements, $p = 3$; statements about the $p$-dependence of the data traffic and of the
optimal block size are analysis rather than measurement.
%% NOTE: p = 3 is the only measured degree. Keep p-dependent claims explicitly labelled as analysis.
The discrete bilinear form reads, in standard notation with jumps $\jump{\cdot}$ and averages $\avg{\cdot}$ on the face
skeleton $\mathcal{F}_h$,
\begin{equation} \label{eq:sipg}
    a_h(u, v) = \sum_{K \in \mathcal{T}} \int_K \nabla u \cdot \nabla v \, dx
    - \sum_{F \in \mathcal{F}_h} \int_F \Bigl( \avg{\partial_n u} \jump{v} + \jump{u} \avg{\partial_n v}
    - \sigma_F \jump{u} \jump{v} \Bigr) \, ds,
\end{equation}
with the usual modifications of jump and average on boundary faces and a penalty parameter $\sigma_F \sim p^2 / h_F$.
The discrete system $A u = \tilde{b}$ is solved matrix-free: the action of $A$ is recomputed on the fly by
sum-factorized evaluation of the volume and face integrals~\cite{Kronbichler2017a,bastian2019matrix}.

% Paragraph: the operator split, stated once and used throughout.
The two sums of~\eqref{eq:sipg} are evaluated by two separate families of kernels, and we keep them apart throughout
the paper. Writing $a_h = a^{\text{vol}} + a^{\text{face}}$ for the volume and skeleton parts of the form, the discrete
operator splits accordingly,
\begin{equation} \label{eq:operator_split}
    A = A^{\text{vol}} + A^{\text{face}},
\end{equation}
with $A^{\text{vol}}$ the sum-factorized volume kernel of \secref{sec:hermite} and $A^{\text{face}}$ the face
sweeps of \secref{sec:faces}. On a uniformly refined mesh the split is formal: both parts run over
every cell of the mesh, and one may just as well fuse them into a single kernel. Its significance appears under
adaptive refinement, where the two parts run over genuinely \emph{different} cell sets: the volume kernel over
the cells carrying degrees of freedom, the face sweeps over a larger set that includes the auxiliary cells of
\secref{sec:shadow}. Separating them is what allows the face loop to be made uniform without duplicating volume
contributions.

% Paragraph: explain the geometry-traffic argument for Cartesian grids and the cutFEM route.
The restriction to Cartesian grids is a deliberate design decision because it controls data movement. A matrix-free
operator on the storage layout of this work is memory-bound over much of the degree range~\cite{wichrowski2026DSS}, so
its cost depends strongly on the bytes streamed per degree of freedom. Reading and writing the field itself costs two
doubles per degree of freedom. On a deformed mesh, the quadrature loop streams per-point geometry data on top of that:
in three dimensions, six entries of the symmetric metric tensor plus the Jacobian determinant. This geometry traffic
can dominate deformed-cell kernels on GPUs and erode the bandwidth advantage of element-local storage. We therefore
retain a Cartesian background mesh. Complex geometries may be treated by unfitted
discretizations~\cite{burman2015cutfem,bergbauer2025high,cui2025multigrid} or shifted-boundary
variants~\cite{wichrowski2025matrix}, which confine geometric irregularity to a lower-dimensional layer while the bulk
retains the structure exploited here; coupling such a layer to the present kernels is beyond our scope. On Cartesian
cells, the kernels stream no geometry data, the face integrals factorize into precomputed one-dimensional matrices
(\secref{sec:faces}), and the trace scalings of the Hermite-type basis (\secref{sec:hermite}) are exact per-level
constants.

% Paragraph: storage layout as a free parameter.
The broken space $\mathbb{V}_{DG}$ carries no inter-element continuity: every degree of freedom belongs to one cell.
Element-local storage is therefore the standard representation in DG codes, and the solution vector is a concatenation
of per-cell blocks. Since no global numbering constraint ties a DoF to any particular memory address, the in-memory
arrangement of the cell blocks, and of the DoFs within and across them, is a free tuning parameter of the method. To
the high-performance DG frameworks cited here, the layout is typically fixed (often as contiguous per-element blocks in
mesh order) and the kernels are optimized around it~\cite{Kronbichler2017a,muthing2017high,bastian2019matrix,
    fehn2019matrix}. Here, we instead treat the layout itself as a tuning parameter.

% Paragraph: the blocked layout.
Concretely, we store all field data in the blocked multi-dimensional array $[B,\, n_x,\, n_y,\, n_z,\, n_e]$ introduced
for continuous elements in~\cite{wichrowski2026DSS}: $B$ spatially adjacent elements form one memory tile, with $(n_x,
    n_y, n_z)$ the local tensor-product DoF indices and $n_e$ enumerating the elements of a block. The two extremes of the
design space pull in opposite directions. An element-major layout gives perfectly coalesced cross-element access for
the volumetric kernels, but scatters the face fragments of a single element across distant addresses; a node-major
layout keeps each element compact, which is ideal for extracting face traces, but forfeits coalescing across elements.
The block size $B$ interpolates between the two, and its optimum depends on the mix of volumetric and face work, which
in DG is governed by the polynomial degree through the surface-to-volume ratio of the data. We benchmark this trade-off
for the volume and face components of the SIPG operator in \secref{sec:dg_results_face_kernel}.

% Paragraph: adaptivity introduces redundancy into DG storage.
On a uniform DG mesh, the residual on the active cells requires no assembly. Adaptive refinement introduces redundant
storage through the shadow cells of \secref{sec:shadow}; \secref{sec:dg_pairing} shows how a primal--dual formulation
eliminates shadow assembly from the Krylov loop.

%%%%%%%%%%%%%%%%%%%%%%%%
% Face integrals
%%%%%%%%%%%%%%%%%%%%%%%%
\section{Evaluation of Face Integrals}
\label{sec:faces}

% Paragraph: data movement in the face terms.
The volume term of~\eqref{eq:sipg} is evaluated cell by cell with sum factorization and touches only the local DoF
block. Face terms require the trace and normal derivative from two adjacent cells and contribute to both residual
blocks. A cell-centric loop reads the neighbors of the current cell, writes only its residual, and therefore evaluates
each interior face twice. A face-centric loop evaluates each face once but writes both residual blocks, requiring
atomics or face coloring. On an unstructured mesh, both organizations use indirect, index-list-driven gathers that
cause warp divergence and fragmented memory transactions~\cite{Kronbichler2017a,fischer2020scalability}. Trace
extraction adds another cost: a face fragment is a lower-dimensional slice of the DoF block, contiguous only for faces
normal to the slowest storage axis and strided for the others. Cartesian meshes align the local coordinate frames and
avoid orientation permutations; their remaining topological irregularity is the non-matching face of adaptive
refinement (\secref{sec:shadow}). Existing implementations reduce these costs through face batching, index renumbering,
and overlap of gather and compute~\cite{Kronbichler2017a,muthing2017high}, but retain the global face list and its
indirect addressing.

% Paragraph: organize face integrals as structured face exchanges.
The block-structured storage removes the face list for the bulk of the mesh. Within a structured block, the faces
normal to each axis $\alpha$ are enumerated arithmetically, and we evaluate them by one dimensionally-split sweep per
axis: a \emph{face-exchange} pass in which each thread owns one interface, extracts the two coincident face fragments
at statically known offsets, evaluates the averages, jumps, and penalty terms of the numerical flux, and writes the two
resulting contributions back, one to each adjacent cell. Neighbor access becomes a direct load at a hard-coded offset
rather than an indirect gather, and each face is evaluated exactly once, without coloring or double evaluation. The
two-sided write is race-free without atomics because of the layer structure of a single pass: within the sweep for axis
$\alpha$, the face at the lower end of a cell writes only the DoF layers $\{0, 1\}$ of that cell and the face at its
upper end only the layers $\{p-1, p\}$, so no two faces of the pass touch a common degree of freedom. The three sweeps
are therefore issued in sequence, one per axis, and it is the ordering between them, rather than atomics, that resolves
the accumulation of the different axes onto shared layers. Because the sweeps are axis-specialized, the strided
extraction pattern of each pass is fixed at compile time, and the block size $B$ can be tuned so that face fragments
span few cache lines. Only the skeleton between macro-blocks retains a residue of indirection because the identity of
the neighboring block must be looked up. On the Cartesian grid, however, no orientation permutation is needed there:
the exchanged fragments combine index-for-index, and the share of such faces shrinks with the block size.

% Paragraph: exact tensor-product factorization of the face operator on Cartesian faces.
On Cartesian faces, the terms of~\eqref{eq:sipg} depend on their arguments only through traces and normal derivatives;
on an axis-aligned face both are tensor products of a one-dimensional endpoint evaluation in the normal direction and
full basis expansions in the tangential directions, and the penalty $\sigma_F$ is constant on each face. The SIPG
contribution of a face $F$ normal to axis $\alpha$, shared by cells $K^-$ and $K^+$, therefore admits the
factorization:
\begin{equation} \label{eq:face_factorized}
    \tilde{f}_{K^s} \gets \tilde{f}_{K^s} + \sum_{s' \in \{-,+\}}
    \bigl( M \otimes \cdots \otimes \underbrace{N^{ss'}}_{\alpha} \otimes \cdots \otimes M \bigr)\, u_{K^{s'}},
    \qquad s \in \{-,+\},
\end{equation}
where $M$ is the 1D mass matrix occupying the tangential slots and the four \emph{face-coupling matrices} $N^{ss'} \in
    \mathbb{R}^{(p+1) \times (p+1)}$ occupy the normal slot; we abbreviate the factor with $N^{ss'}$ in slot $\alpha$
by $\mathcal{N}^{ss'}_\alpha$. The $N^{ss'}$ are assembled once, from rank-one outer
products of the vectors of 1D basis values and (scaled) derivatives at the face endpoint; for instance
\begin{equation} \label{eq:face_coupling}
    N^{--} = \sigma_F\, e\, e^T - \tfrac{1}{2} \bigl( e\, g^T + g\, e^T \bigr),
    \qquad e_i = \psi_i(1), \quad g_i = h^{-1} \psi_i'(1),
\end{equation}
with the analogous expressions, differing only in signs and in which endpoint is evaluated, for the mixed and
$(+,+)$ blocks. Symmetry of SIPG makes the $4(p+1)^2$-entry family $\{N^{ss'}\}$ symmetric as a whole, and on the
Cartesian mesh the \emph{same} four matrices serve every face of a refinement level and every axis; only the tensor
slot moves. The complete face-integration data of the method is thus $4(p+1)^2$ numbers per level, resident in
constant memory or registers; no face quadrature is performed at runtime, and the face kernel reduces to the dense 1D
contractions of~\eqref{eq:face_factorized} on contiguous sub-tiles.

%%%%%%%%%%%%%%%%%%%%%%%%
% Tensor-product evaluation and the Hermite basis
%%%%%%%%%%%%%%%%%%%%%%%%
\subsection{Tensor-Product Evaluation and the Hermite Basis}
\label{sec:hermite}

% Paragraph: the volumetric pipeline on Cartesian cells.
The volume term follows the standard sum-factorized pipeline~\cite{Orszag1980,deville2002highorder,Kronbichler2012},
which on a Cartesian cell requires no quadrature loop at all: with the 1D stiffness and mass matrices $K$ and $M$
precomputed from the basis,
\begin{equation} \label{eq:eval_pipeline}
    \tilde{f}_K = h^{d-2} \bigl( K \otimes M \otimes M + M \otimes K \otimes M + M \otimes M \otimes K \bigr)\, u_K,
\end{equation}
i.e.\ the same seven-contraction form benchmarked for the Laplacian in~\cite{wichrowski2026DSS}. Each contraction acts
along one tensor axis and maps directly onto the blocked storage layout: the $B$ and $n_e$ axes are vectorized over,
and the 1D matrices are small dense factors suited to tile-based hardware. The face terms are the
contractions~\eqref{eq:face_factorized} on the face-adjacent sub-tiles.

% Paragraph: the face-data problem of a generic nodal basis.
The choice of the 1D basis decides how much of a neighbor's DoF block the face contractions must read, through the
sparsity of the endpoint vectors $e$ and $g$ in~\eqref{eq:face_coupling}. With a Gauss--Lobatto nodal basis the value
vector $e$ is a unit vector, so the trace is the boundary layer of the DoF block, whereas the derivative vector $g$ is
dense: every 1D Lagrange function has a nonvanishing derivative at the endpoints, so the coupling matrices $N^{ss'}$
have full rows and columns, $\partial_n u|_F$ depends on the \emph{entire} DoF block of the cell, and the face exchange
must load $(p+1)^d$ values per side to produce $(p+1)^{d-1}$ numbers. We remove this imbalance by a change of basis.

\subsubsection{The Hermite-Type Basis and Two-Layer Face Access}
\label{sec:hermite_basis}

% Paragraph: definition of the 1D basis by endpoint conditions.
Following~\cite{Kronbichler2017a,kronbichler2019multigrid}, we replace the 1D Lagrange basis by a \emph{Hermite-type}
basis $\{\psi_0, \dots, \psi_p\} \subset \mathbb{P}_p([0,1])$, $p \ge 3$, defined by the endpoint conditions
\begin{equation} \label{eq:hermite_conditions}
    \begin{aligned}
        \psi_0(0) & = 1, & \psi_0'(0) & = 0, & \qquad \psi_1(0) & = 0, & \psi_1'(0) & = 1,                         \\
        \psi_i(0) & = 0, & \psi_i'(0) & = 0, & \qquad \psi_i(1) & = 0, & \psi_i'(1) & = 0, \qquad 2 \le i \le p-2,
    \end{aligned}
\end{equation}
with the mirrored conditions $\psi_{p-1}'(1) = 1$, $\psi_p(1) = 1$ (and all other endpoint values and derivatives of
$\psi_0, \psi_1, \psi_{p-1}, \psi_p$ equal to zero) at the right endpoint. The four endpoint functions are the cubic
Hermite polynomials augmented by higher-order bubbles enforcing~\eqref{eq:hermite_conditions}; the $p-3$ interior
functions may be chosen as Lagrange functions on interior Gauss--Lobatto nodes multiplied by the quartic bubble
$x^2(1-x)^2$, which annihilates their endpoint values and derivatives. For a 1D expansion $u(x) = \sum_i u_i
    \psi_i(x)$ the conditions~\eqref{eq:hermite_conditions} give the defining property of the basis:
\begin{equation} \label{eq:hermite_traces_1d}
    u(0) = u_0, \qquad u'(0) = u_1, \qquad u'(1) = u_{p-1}, \qquad u(1) = u_p,
\end{equation}
i.e.\ the value and the derivative at each endpoint are \emph{coefficients} of the expansion, read off without any
contraction.

% Paragraph: show that tensor-product traces occupy two layers.
The property tensorizes. Consider a 3D cell with coefficients $u_{ijk}$ and, without loss of generality, the face $F =
    \{x_1 = 0\}$. Inserting~\eqref{eq:hermite_traces_1d} into the tensor-product expansion
\begin{equation}
    \label{eq:hermite_expansion_3d}
    u(x) = \sum_{ijk} u_{ijk}\,
    \psi_i(x_1) \psi_j(x_2) \psi_k(x_3)
\end{equation}
yields
\begin{equation} \label{eq:hermite_traces_3d}
    u\big|_F = \sum_{j,k} u_{0jk}\, \psi_j(x_2) \psi_k(x_3),
    \qquad
    \partial_n u\big|_F = \sum_{j,k} u_{1jk}\, \psi_j(x_2) \psi_k(x_3),
\end{equation}
since every $\psi_i$ with $i \ge 2$ contributes neither value nor derivative at $x_1 = 0$. The trace is the DoF layer
$i = 0$ and the normal derivative is the layer $i = 1$ (up to the scaling $h^{-1}$ of the reference-to-physical map);
the analogous statement holds for every face, with the layers $i \in \{p-1, p\}$ on the far side. All face data of a
cell thus reside in the two layers adjacent to each face, namely the face-adjacent sub-tiles of the blocked
storage layout.

% Paragraph: the coupling matrices collapse to a symmetric 4x4 on the face layers.
In this basis the endpoint vectors of~\eqref{eq:face_coupling} become unit vectors, $e = e_p$ and $g = h^{-1} e_{p-1}$
at the right endpoint (and $e_0$, $h^{-1} e_1$ at the left), so the four coupling matrices $N^{ss'}$ lose all but a
handful of entries: the face operator acts only on the two face-adjacent layers of each side. Writing $a^\pm$ for the
value layer and $b^\pm$ for the derivative layer of the two cells at a common face, the normal-direction coupling
of~\eqref{eq:face_factorized} collapses to the symmetric $4 \times 4$ stencil
\begin{equation} \label{eq:hermite_face_stencil}
    \begin{pmatrix} f_a^- \\ f_b^- \\ f_a^+ \\ f_b^+ \end{pmatrix}
    \gets
    \begin{pmatrix} f_a^- \\ f_b^- \\ f_a^+ \\ f_b^+ \end{pmatrix}
    +
    \begin{pmatrix}
        \sigma_F  & -\beta & -\sigma_F & -\beta \\
        -\beta    & 0      & \beta     & 0      \\
        -\sigma_F & \beta  & \sigma_F  & \beta  \\
        -\beta    & 0      & \beta     & 0
    \end{pmatrix}
    \begin{pmatrix} a^- \\ b^- \\ a^+ \\ b^+ \end{pmatrix},
    \qquad \beta = \frac{1}{2h},
\end{equation}
applied under the tangential mass contractions $M \otimes M$ of~\eqref{eq:face_factorized}. Both cells are mapped from
the reference interval with the same orientation, so $b^-$ and $b^+$ are derivatives along $+\alpha$ rather than along
each cell's outward normal; this is why the two derivative rows of~\eqref{eq:hermite_face_stencil} coincide, and why no
outward-normal sign appears anywhere in the stencil. Its entries are then read off directly: the value rows carry
$\sigma_F \jump{u} - \avg{\partial_n u}$ and the derivative rows $-\tfrac{1}{2} h^{-1} \jump{u}$, with the factor
$\tfrac{1}{2}$ of the average and the $h^{-1}$ of the reference-to-physical map collected into $\beta$. A face
transaction reads and writes $2(p+1)^{d-1}$ values per side instead of $(p+1)^d$, reducing the neighbor traffic
by a factor $(p+1)/2$. It consists of dense contractions on contiguous sub-tiles, preserving the static addressing
of the face-exchange sweeps.

% Paragraph: cost of the basis change.
The volumetric operator~\eqref{eq:eval_pipeline} retains its size and operation count; the matrices $K$ and $M$ are
tabulated for $\{\psi_i\}$. The transform between the Lagrange and Hermite-type bases is a fixed, well-conditioned
$(p+1) \times (p+1)$ matrix applied per tensor direction, needed only when interfacing with nodal data (e.g.\ for
output or right-hand-side evaluation); the discrete space, and hence the discretization, is unchanged. The
block-diagonal mass matrix of DG loses its diagonality in the Hermite basis, but none of our algorithms relies on it.

%%%%%%%%%%%%%%%%%%%%%%%%
% Shadow cells for non-matching faces
%%%%%%%%%%%%%%%%%%%%%%%%
\subsection{Shadow Cells for Non-Matching Faces}
\label{sec:shadow}

% Paragraph: explain how non-matching faces break the uniform face exchange.
Local adaptive refinement produces, along every refinement edge, faces at which a single coarse cell abuts $2^{d-1}$
finer neighbors. Throughout we assume the mesh is \emph{2:1 balanced}: adjacent cells differ by at most one refinement
level, an invariant maintained by standard octree mesh
frameworks~\cite{burstedde2011p4est,bangerth2012algorithms}.\footnote{We assume 2:1 balance throughout because it is
    what we implemented and measured, not because the construction appears to require it. We see no obstruction to lifting
    it: at a jump of more than one level the shadow layer would have to be nested to the depth of the jump, so that every
    active face again meets a partner at its own level, and the local multigrid method of \secref{sec:local_mg} should
    carry over unchanged. We have not tested this.} Non-matching faces break the uniform data flow established above. The
face integral on a non-matching interface couples the coarse trace, restricted to a subface, with a fine trace; its
evaluation requires subface quadrature with shifted and scaled 1D rules, per-subface interpolation matrices, and a
branch distinguishing which side is coarse. This requires a second family of face kernels with irregular addressing and
warp divergence. Classical matrix-free DG implementations accept this cost and maintain dedicated data structures and
kernels for the non-matching case~\cite{Kronbichler2017a}; mortar-type treatments in the spectral-element
literature~\cite{kopriva1996conservative,kopriva2002mortar} similarly route the coupling through auxiliary interface
spaces.

% Paragraph: introduce shadow cells as the remedy for non-matching faces.
We eliminate non-matching faces from the face loop. Every active cell whose face lies on the \emph{coarse side} of a
refinement edge is artificially refined, and its $2^d$ children are allocated in storage as \emph{shadow cells}
(Figure~\ref{fig:shadow_dg}). We refer to these auxiliary cells as \emph{shadows} because they serve exclusively to
evaluate interface terms. Like a shadow, they carry only an outline: these cells do not participate in volumetric
integration, so on a deformed mesh only the geometry of their periphery is needed; on the Cartesian mesh the cell size
$h$ already determines it, and the shadows carry no geometric data at all. The shadows are structurally identical to
active cells in DoF count and block layout but carry no independent degrees of freedom: their content is always derived
from the parent by polynomial prolongation. With the shadows in place, every face of every active fine cell finds a
\emph{matching} partner at its own level: either a genuine fine neighbor or a shadow child. The non-matching faces of
the original mesh are skipped; their contributions flow through the cell-shadow faces instead.

\begin{figure}[htbp]
    \centering
    % Shadow children on the coarse side of a 2D h-adaptive Cartesian mesh.
% The active refined cells form the same patch as in fig_dss_shadow.tex, while
% the gray overlays mark virtual children of coarse cells along its boundary.
\begin{tikzpicture}[
                scale=0.82,
                coarse/.style={draw=black, thin},
                fine/.style={draw=black!70, line width=0.4pt},
                shadowfill/.style={fill=gray!35},
                shadow/.style={draw=black!65, densely dashed, line width=0.45pt},
                interface/.style={red!75!black, line width=1.45pt,
                    shorten >=2.5pt, shorten <=2.5pt},
                label/.style={font=\footnotesize\bfseries},
                legend/.style={font=\scriptsize, anchor=west}]
        % Active coarse cells whose virtual children cover the coarse side of the interface.
        \def\shadowparents{%
                0/2,1/2,6/2,7/2,%
                2/3,3/3,4/3,5/3}

        % Coarse background grid, including one intact layer above the shadow band.
        \draw[coarse] (0,0) rectangle (8,5);
        \foreach \i in {1,...,7} { \draw[coarse] (\i,0) -- (\i,5); }
        \foreach \j in {1,...,4} { \draw[coarse] (0,\j) -- (8,\j); }

        % Fine active cells: two globally refined layers and a raised central patch.
        \foreach \c in {0,...,7} {
                \draw[fine] (\c+0.5,0) -- (\c+0.5,2);
                \draw[fine] (\c,0.5) -- (\c+1,0.5);
                \draw[fine] (\c,1.5) -- (\c+1,1.5);
        }
        \draw[fine] (0,1) -- (8,1);
        \foreach \c in {2,...,5} {
                \draw[fine] (\c+0.5,2) -- (\c+0.5,3);
                \draw[fine] (\c,2.5) -- (\c+1,2.5);
        }

        % Overlay the virtual shadow children on coarse cells adjacent to the fine patch.
        \foreach \c/\r in \shadowparents {
                \fill[shadowfill] (\c,\r) rectangle (\c+1,\r+1);
                \draw[coarse] (\c,\r) rectangle (\c+1,\r+1);
                \draw[shadow] (\c+0.5,\r) -- (\c+0.5,\r+1);
                \draw[shadow] (\c,\r+0.5) -- (\c+1,\r+0.5);
        }

        % Original non-matching interface; its two halves become matching fine-shadow faces.
        \foreach \x in {0,1,6,7} { \draw[interface] (\x,2) -- (\x+1,2); }
        \foreach \x in {2,3,4,5} { \draw[interface] (\x,3) -- (\x+1,3); }
        \draw[interface] (2,2) -- (2,3);
        \draw[interface] (6,2) -- (6,3);

        % Labels and compact legend.
        \node[label] at (4,0.22) {active fine cells};
        \node[label] at (4,4.72) {active coarse cells};
        \begin{scope}[shift={(8.35,2.25)}]
                \fill[shadowfill] (0,0) rectangle (0.28,0.22);
                \draw[coarse] (0,0) rectangle (0.28,0.22);
                \draw[shadow] (0.14,0) -- (0.14,0.22);
                \node[legend] at (0.38,0.11) {shadow children};
                \draw[interface] (0,0.65) -- (0.57,0.65);
                \node[legend] at (0.68,0.65) {matching fine--shadow faces};
        \end{scope}
\end{tikzpicture}
    \caption{Shadow cells at a refinement edge. The gray overlays are virtual children of the active coarse cells
        adjacent to the refined patch. They receive the prolonged coarse-cell solution, so each active fine cell meets
        a same-level shadow across the red interface. The face loop therefore processes the red segments as matching
        fine--shadow faces and skips the original non-matching interfaces. When an assembled residual is required, the
        shadow restriction assembles contributions accumulated on the shadows into their active parents.}
    \label{fig:shadow_dg}
\end{figure}

% Paragraph: the algorithm, narrative form, with the two parts of the split on their own cell sets.
The shadow-cell operator application evaluates the two parts of~\eqref{eq:operator_split} on different cell sets. The
volume kernel runs only over active cells because applying it to shadows would duplicate their parent's contribution.
Before the face sweeps, the shadow refresh embeds each parent into its children by $\bigotimes_i\mathcal{P}_{b_i}$,
where $b = b(c) \in \{0,1\}^d$ is the binary multi-index identifying the position of the child $c$ within its parent.
The sweeps then process all matching faces, including fine-shadow faces, and skip the original non-matching interfaces;
we write $\mathcal{F}^i_{\text{exch}}$ for the matching faces normal to axis $i$ with at least one active side,
excluding faces between shadows of distinct parents. Every invocation receives same-level, same-size fragments and need
not distinguish active from shadow cells. When an assembled residual is required, shadow assembly adds the shadow
contributions to their parents by $\bigotimes_i\mathcal{P}_{b_i}^T$, after which the shadows are cleared. This clearing
needs no pass of its own: the volume kernel \emph{writes} its output rather than accumulating into it, and since it
scales every cell by a factor that is zero on shadows, it leaves the shadow blocks zeroed and the whole residual buffer
in a known state for the accumulating face sweeps. The next operator application therefore performs the clear
implicitly, and no separate pass over the shadow layer is needed.

% Paragraph: which transfers survive in which caller.
Algorithm~\ref{alg:shadow_face_loop} summarizes these steps. It is the self-contained form of an operator application:
one that starts from arbitrary active data and returns an assembled active-mesh residual. Inside the Krylov iteration
of \secref{sec:dg_pairing} \emph{both} transfers are dropped, and only the volume kernel and the face sweeps remain.
Shadow assembly is dropped because the iteration carries split duals and never needs the assembled form; the shadow
refresh is dropped because its iterates are consistent by construction and the vector updates preserve that property,
so the shadows already hold their parents' prolongations on entry to every application. Multigrid performs shadow
assembly as part of its level restriction and restores consistency with a single shadow refresh at the exit of the
V-cycle. All of these transfers act only on the one-cell-deep shadow layer and therefore have refinement-edge surface
cost.

\begin{algorithm}[H]
    \caption{Shadow-cell evaluation of $A u$ on an adaptively refined mesh}
    \label{alg:shadow_face_loop}
    \begin{algorithmic}[1]
        \Require Solution vector $u$, residual output $\tilde{f}$, shadow parents $\mathcal{T}_{\text{sh}}$ with children $\text{ch}(K)$.
        \For{\textbf{each} active cell $K \in \mathcal{T}_{\text{act}}$ \textbf{in parallel}}
        \State $\tilde{f}|_K \gets A^{\text{vol}} u|_K$ \Comment{Volume kernel~\eqref{eq:eval_pipeline}}
        \EndFor
        \For{\textbf{each} shadow parent $K \in \mathcal{T}_{\text{sh}}$ \textbf{in parallel}}
        \State \label{ln:shadow_refresh} $u|_{K_c} \gets \bigl( \bigotimes_{i=1}^d \mathcal{P}_{b_i} \bigr)\, u|_K$ \textbf{for all} $K_c \in \text{ch}(K)$ \Comment{Shadow refresh}
        \EndFor
        \For{axis $i = 1, \dots, d$}
        \For{\textbf{each} face $F \in \mathcal{F}^i_{\text{exch}}$ with cells $K^-, K^+$ \textbf{in parallel}}
        \State $\tilde{f}|_{K^s} \gets \tilde{f}|_{K^s} + \sum_{s' \in \{-,+\}} \mathcal{N}^{ss'}_i\, u|_{K^{s'}}$,
        \ $s \in \{-,+\}$ \Comment{Face exchange~\eqref{eq:face_factorized}}
        \EndFor
        \EndFor
        \For{\textbf{each} shadow parent $K \in \mathcal{T}_{\text{sh}}$ \textbf{in parallel}}
        \State \label{ln:shadow_restriction} $\tilde{f}|_K \gets \tilde{f}|_K + \sum_{K_c \in \text{ch}(K)} \bigl( \bigotimes_{i=1}^d \mathcal{P}_{b_i}^T \bigr)\, \tilde{f}|_{K_c}$,\quad
        $\tilde{f}|_{K_c} \gets 0$ \Comment{Shadow restriction}
        \EndFor
    \end{algorithmic}
\end{algorithm}

% Paragraph and theorem: establish that the construction introduces no approximation.
The shadow construction reorganizes the computation without changing the standard SIPG face integrals on the original
non-matching mesh. The following theorem states this equivalence. For a face $F$ of the skeleton, let $a_F(u, v)$
denote the SIPG face form, i.e.\ the contribution of $F$ to the second sum in~\eqref{eq:sipg}, defined for any pair of
functions that are piecewise polynomial on the two sides of $F$. On a non-matching interface, where a coarse cell $K$
abuts fine cells $K_1', \dots, K_{2^{d-1}}'$ across the subfaces $F_c = \partial K_c \cap \partial K_c'$ of its face $F
    = \bigcup_c F_c$, the standard method evaluates $a_{F_c}$ subface by subface, pairing the restriction of the coarse
traces to $F_c$ with the fine traces~\cite{Kronbichler2017a}. Denote by $\mathcal{P}_K \colon \mathbb{Q}_p(K) \to
    \prod_c \mathbb{Q}_p(K_c)$ the parent-to-children polynomial embedding realized by the tensor-product matrices
$\bigotimes_i \mathcal{P}_{b_i}$, and by $K_c$ the shadow child of $K$ adjacent to the subface $F_c$. One convention
has to be fixed before the two methods can be compared: on a fine-shadow face both sides are level-$\ell$ cells, so the
shadow evaluation necessarily uses the fine penalty $\sigma_{F_c} \sim p^2 / h_{F_c}$ with $h_{F_c}$ the fine cell
size. We therefore compare against the standard method run with the same choice, which is also the usual one: the
penalty of a non-matching face is taken from the smaller of the two cells, since it is the fine trace inverse
inequality that dictates the coercivity threshold. The left-hand side of~\eqref{eq:shadow_exact} below is what
Algorithm~\ref{alg:shadow_face_loop} computes, namely the matching-face forms on the fine-shadow faces, with the trial
data on the shadows given by the shadow embedding $\mathcal{P}_K u|_K$ and the test data routed back to $K$ by the
transposed embedding.

\begin{theorem}[Exactness of the shadow-cell face evaluation]
    \label{thm:shadow_exact}
    Let $F = \bigcup_c F_c$ be a non-matching interface between a coarse cell $K$ and fine cells $K_c'$, let $u, v$ be
    arbitrary elements of the broken space $\mathbb{V}_{DG}$ on the active mesh, and let both sides use the fine-level
    penalty $\sigma_{F_c}$ on every subface. Then
    \begin{equation} \label{eq:shadow_exact}
        \sum_{c} a_{F_c}\bigl( \{ u|_{K_c'},\, (\mathcal{P}_K u|_K)|_{K_c} \},\, \{ v|_{K_c'},\, (\mathcal{P}_K v|_K)|_{K_c} \} \bigr)
        \;=\; \sum_{c} a_{F_c}\bigl( \{ u|_{K_c'},\, u|_K \},\, \{ v|_{K_c'},\, v|_K \} \bigr),
    \end{equation}
    the right-hand side being the standard non-matching SIPG contribution of $F$. The identity holds for the computed
    values as well: the same-level quadrature on each fine-shadow face integrates the same integrand over the same
    subface as the dedicated subface rule.
\end{theorem}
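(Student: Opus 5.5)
The plan is to reduce the identity to a pointwise statement about traces on each subface $F_c$. The key observation is that the face form $a_{F_c}(w,z)$ depends on its arguments only through four quantities on $F_c$: the traces of $w$ and $z$ from both sides, and their normal derivatives from both sides, together with the constants $\sigma_{F_c}$ and the normal. This is read off from~\eqref{eq:sipg}, since $a_{F_c}$ is the integral over $F_c$ of $-\avg{\partial_n w}\jump{z} - \jump{w}\avg{\partial_n z} + \sigma_{F_c}\jump{w}\jump{z}$. So it suffices to show two things. First, on $F_c$ the trace and normal derivative of $(\mathcal{P}_K u|_K)|_{K_c}$ coincide with those of $u|_K$, and likewise for $v$. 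Second, the fine-side data and the penalty are the same on both sides of~\eqref{eq:shadow_exact}. The second point holds by hypothesis: the fine traces enter unchanged, and $\sigma_{F_c}$ is the same fine-level value on both sides.

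For the first point I use that $\mathcal{P}_K$ is the exact polynomial embedding. Since $K_c \subset K$ and $\mathbb{Q}_p(K)$ restricted to $K_c$ lies in $\mathbb{Q}_p(K_c)$, the function $(\mathcal{P}_K u|_K)|_{K_c}$ equals $u|_K$ restricted to $K_c$ as a polynomial, identically on the closure $\overline{K_c}$. In matrix terms, each 1D factor $\mathcal{P}_{b_i}$ represents the $\psi$-expansion of $x\mapsto \psi_j((x+b_i)/2)$ in the Hermite-type basis, and this representation is exact because the space is $\mathbb{P}_p$ in each variable. Since $F_c \subset \partial K_c \cap \partial K$, the values and all derivatives of the two polynomials agree on $F_c$, in particular the normal derivative. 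I need to be careful here that the normal derivative is taken in physical coordinates. The reference-coordinate derivative on the child picks up a factor $1/2$ relative to the parent. This factor is exactly compensated by the fine-level scaling $h^{-1}$ in $\beta$ of~\eqref{eq:hermite_face_stencil}, with $h=h_{F_c}$. The physical derivatives therefore coincide.

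Substituting into the integrand, the integrands of the two sides agree pointwise on each $F_c$, and summing over $c$ gives~\eqref{eq:shadow_exact}. The left-hand side is what Algorithm~\ref{alg:shadow_face_loop} computes. The trial side is the refresh, and the test side comes from bilinearity: the face exchange produces $\mathcal{N}^{ss'}$ acting on shadow coefficients, and pairing with $\mathcal{P}_K v|_K$ equals pairing $v|_K$ with the transposed output $\mathcal{P}_K^T\tilde f|_{K_c}$. This is the shadow restriction.

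For the statement about computed values, I argue that the same-level face evaluation on $F_c$ is the matching-face rule pulled back to $F_c$. The dedicated subface rule of the standard method is a quadrature on the same set $F_c$, namely the shifted and scaled 1D rules. Both methods therefore evaluate the same polynomial integrand, and the factorized form~\eqref{eq:face_factorized} with the exact 1D mass matrices $M$ integrates it exactly; if either uses Gauss quadrature with enough points it is also exact, which yields agreement in exact arithmetic. I expect the main obstacle to be this last point, stating precisely in what sense the quadratures coincide. The integrand has tangential degree $2p$ per direction. I would argue that each rule integrates it exactly, so they agree, rather than claiming node-by-node equality. A secondary care point is the boundary-face and orientation conventions, but the common $+\alpha$ orientation noted after~\eqref{eq:hermite_face_stencil} removes any sign issues.
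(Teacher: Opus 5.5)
Your proposal is correct and follows the paper's proof essentially step for step. Exactness of the polynomial embedding gives coincidence of traces and physical normal derivatives on each $F_c$, the face form depends only on these, and Algorithm~\ref{alg:shadow_face_loop} realizes the left-hand side through the shadow refresh on the trial side and the transpose identity $\langle \mathcal{P}_K^T \tilde f, v|_K\rangle = \langle \tilde f, \mathcal{P}_K v|_K\rangle$ on the test side. The only deviation is in the quadrature claim: the paper identifies the fine-level rule on $F_c$ directly with the shifted and scaled subface rule, which needs no exactness assumption, whereas you argue that both rules integrate the degree-$2p$ integrand exactly, which is also valid here but additionally requires the subface rule to have enough points.
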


\begin{proof}
    The embedding $\mathcal{P}_K$ is exact: $(\mathcal{P}_K w)|_{K_c}$ and $w|_K$ are the \emph{same polynomial},
    restricted to the subdomain $K_c \subset K$. Hence on each subface $F_c$ the trace and the normal derivative of
    the prolonged trial field coincide pointwise with those of $u|_K$, and likewise for the test field:
    \begin{equation*}
        (\mathcal{P}_K u|_K)\big|_{F_c} = u|_K\big|_{F_c}, \qquad
        \partial_n (\mathcal{P}_K u|_K)\big|_{F_c} = \partial_n u|_K\big|_{F_c}.
    \end{equation*}
    The face form $a_{F_c}$ depends on its arguments only through these traces and normal derivatives, so each
    summand on the left of~\eqref{eq:shadow_exact} equals the corresponding summand on the right as an integral over
    $F_c$. Algorithm~\ref{alg:shadow_face_loop} evaluates the left-hand side as follows. On the
    trial side this is the shadow refresh of line~\ref{ln:shadow_refresh}. On the test side, the face kernel on $F_c$ accumulates into the
    shadow child $K_c$ the local functional $w_c \mapsto a_{F_c}(\cdot, w_c)$ expressed in the fine basis of $K_c$;
    the shadow restriction of line~\ref{ln:shadow_restriction} applies $\mathcal{P}_K^T$, and for any coarse test function $v|_K$,
    \begin{equation*}
        \langle \mathcal{P}_K^T \tilde{f}, v|_K \rangle = \langle \tilde{f}, \mathcal{P}_K v|_K \rangle,
    \end{equation*}
    i.e.\ the accumulated coarse residual is the face form tested with the embedded coarse test functions,
    which is the left-hand side of~\eqref{eq:shadow_exact}. For the quadrature claim, note that the fine-level rule
    on the matching face $F_c$ is, after the affine subface map, the shifted and scaled rule that a
    dedicated non-matching kernel applies on $F_c$, and by the trace identities above it is applied to the same
    integrand.
\end{proof}

% Paragraph: operator form of refinement edge contribution.
In operator form, the contribution of the refinement edge to the coarse cell is $\mathcal{P}_K^T \mathcal{F}
    \mathcal{P}_K$, with $\mathcal{F}$ the matching-face flux operator of the fine level. This Galerkin projection
preserves the symmetry of SIPG.

% Paragraph: shadow-shadow faces and the cost of the shadow layer.
Once the shadows are in place, a face of the fine level has both sides shadowed in two situations, and they must be
distinguished. Faces between \emph{siblings}, which are shadow children of one common parent, may be skipped or
processed. Because their data are restrictions of one parent polynomial, the trial jump vanishes and the remaining
consistency contributions cancel under shadow assembly against single-valued coarse test functions; the same
cancellation holds in the duality pairings of \secref{sec:dg_pairing}. Processing these faces therefore leaves the
assembled result unchanged, while skipping them is an optional optimization. Faces between shadows of \emph{different}
parents must, by contrast, be excluded. Such a face arises wherever two adjacent coarse cells both border the refined
region and therefore both cast shadows; it is the fine-level duplicate of the coarse--coarse face between the two
parents, which the face loop already visits at the coarse level. Its trial jump does not vanish, and processing it
would add a second copy of that flux, evaluated with the fine cell size and the fine penalty. The face selection is
thus: all matching faces with at least one active cell, plus, optionally, sibling-shadow faces. The shadow layer adds
$2^d$ cells per coarse cell on a refinement edge and hence surface-order storage and traffic. In return, it avoids
subface quadrature, refinement-case branches, and irregular face kernels. Shadows are allocated as ordinary fine-level
cells, adding blocks without changing their internal layout. Their cost is already contained in the adaptive-depth
measurements of \secref{sec:dg_results_throughput}, whose rates are normalized by active degrees of freedom while the
kernels run over the shadow layer as well.

% Paragraph: the two-layer structure of the Hermite basis carries over to the shadows, and why we do not exploit it.
The two-layer structure of \secref{sec:hermite_basis} carries over to the shadow transfers. A shadow child shares the
face in question with its parent, so along the normal axis the polynomial prolongation inherits the trace
property~\eqref{eq:hermite_traces_1d}: the child's value coefficient on that face is the parent's, and its derivative
coefficient is the parent's halved by the reference-to-physical scaling of the subdivided cell, i.e.\ the first two
rows of $\mathcal{P}_{b_i}$ are $e_0$ and $\tfrac{1}{2} e_1$. Only the two face-adjacent layers of the parent therefore
enter the data a face kernel reads from a shadow, and the shadow restriction returns contributions to those same
layers; the polynomial prolongation remains dense in the tangential directions, where the child covers half the
parent's extent. We do not exploit this sparsity. The shadow refresh processes whole cells, and all $2^d$ children of a
parent are allocated and transferred rather than only the $2^{d-1}$ adjacent to the refinement edge. Some of the
resulting cells are never read, but the transfer is then bit-for-bit the ordinary level prolongation of
\secref{sec:local_mg} applied to a larger cell list, with no special case and no separate kernel. As elsewhere in the
construction, redundant storage buys a uniform loop.

%%%%%%%%%%%%%%%%%%%%%%%%
% Primal-dual pairings on shadowed storage: no shadow assembly in the Krylov loop
%%%%%%%%%%%%%%%%%%%%%%%%
\section{Duality Pairings on Shadowed Storage}
\label{sec:dg_pairing}

% Paragraph: introduce shadow redundancy and the primal-dual question.
Shadow cells introduce redundant storage into DG. If Algorithm~\ref{alg:shadow_face_loop} omits the final shadow
restriction, the refinement-edge contribution to a coarse cell remains on its shadow; we call this representation
\emph{split}. To avoid shadow assembly after every operator application, we use the primal--dual formulation for
redundant active-and-shadow storage~\cite{wichrowski2026DSS}: conjugate gradients pair a split \emph{dual} vector with
a consistent \emph{primal} vector. Bilinearity permits the Krylov iteration to use the split representation, with
shadow assembly fused into the multigrid level restriction (\secref{sec:local_mg}).

% Paragraph: define the extended space, embedding, constraint, and split duals.
Let $\mathbb{V}_{\text{ext}}$ denote the \emph{extended} block space: one DoF block per active cell and per shadow
cell. The \emph{shadow embedding}
\begin{equation} \label{eq:shadow_gather}
    \begin{aligned}
        \mathcal{G}_{\text{sh}} & \colon \mathbb{V}_{DG} \to \mathbb{V}_{\text{ext}},                                                        \\
        (\mathcal{G}_{\text{sh}} u)\big|_K
                                & = u\big|_K,
                                &                                                                      & \text{($K$ active)},                \\
        (\mathcal{G}_{\text{sh}} u)\big|_{K_c}
                                & = \bigl( \textstyle\bigotimes_i \mathcal{P}_{b_i} \bigr)\, u\big|_K,
                                &                                                                      & \text{($K_c$ shadow child of $K$)}.
    \end{aligned}
\end{equation}
implements the shadow refresh of line~\ref{ln:shadow_refresh} of Algorithm~\ref{alg:shadow_face_loop}, together with the identity on active
cells. We call a vector $u \in
    \mathbb{V}_{\text{ext}}$ \emph{consistent} (primal) if it lies in the image of $\mathcal{G}_{\text{sh}}$, i.e.\ if
its shadow blocks are the polynomial prolongations of their parents; consistency is the fixed-point condition $u = C_{\text{sh}}
    u$ of the idempotent constraint operator that refreshes every shadow from its parent. Dual vectors $\tilde{f}
    \in \mathbb{V}_{\text{ext}}^*$ are
split residuals: local integrals whose shadow blocks hold contributions belonging to the parent's test functions.
Their \emph{shadow assembly} is the transpose of the embedding,
\begin{equation} \label{eq:shadow_assembly}
    \mathcal{G}_{\text{sh}}^T \colon \mathbb{V}_{\text{ext}}^* \to \mathbb{V}_{DG}^*,
    \qquad
    (\mathcal{G}_{\text{sh}}^T \tilde{f})\big|_K
    = \tilde{f}\big|_K + \sum_{K_c \in \text{ch}(K)} \bigl( \textstyle\bigotimes_i \mathcal{P}_{b_i}^T \bigr)\, \tilde{f}\big|_{K_c},
\end{equation}
which is the shadow restriction of line~\ref{ln:shadow_restriction} of Algorithm~\ref{alg:shadow_face_loop} (with the sum empty on cells
without shadows). A dual vector is a valid representative of a functional on $\mathbb{V}_{DG}$ through its shadow assembly
alone; in particular the right-hand side is represented by the vector $\tilde{b}$ that carries the local load
integrals on active cells and is \emph{zero on all shadows}.

% Paragraph: the split operator on the extended storage.
Evaluating the split~\eqref{eq:operator_split} on the extended storage and stopping
Algorithm~\ref{alg:shadow_face_loop} before shadow assembly defines
\begin{equation} \label{eq:split_operator}
    \tilde{A} \colon \mathbb{V}_{\text{ext}} \to \mathbb{V}_{\text{ext}}^*,
    \qquad
    \tilde{A} = A^{\text{vol}} + A^{\text{face}},
\end{equation}
for the resulting \emph{shadowed operator}, with the two parts on their respective cell sets: $A^{\text{vol}}$ over
the active cells alone and $A^{\text{face}}$ over the face selection of \secref{sec:shadow}: all matching faces with
at least one active cell, including fine-shadow ones, and, harmlessly, the sibling-shadow faces. Because shadows
receive face contributions but no volume contribution, their blocks contain data belonging to the parent's test
functions. Applied to a consistent vector, $\tilde{A}$ produces a split dual: active cells hold their volume term plus the fluxes of their
matching faces, and shadows hold the refinement-edge fluxes destined for their parents. In operator form,
Theorem~\ref{thm:shadow_exact} gives the Galerkin identity
\begin{equation} \label{eq:galerkin_identity}
    \mathcal{G}_{\text{sh}}^T\, \tilde{A}\, \mathcal{G}_{\text{sh}} = A,
\end{equation}
with $A$ the SIPG operator on the active mesh: applying shadow assembly to the split result of the shadowed evaluation
of a consistent vector yields the exact active-mesh residual.

% Lemma: the pairing identity.
The duality pairings of a Krylov iteration do not require shadow assembly.

\begin{lemma}[Duality pairing on shadowed storage]
    \label{lem:shadow_pairing}
    Let $\tilde{f} \in \mathbb{V}_{\text{ext}}^*$ be a split dual vector and $u = \mathcal{G}_{\text{sh}}
        u_{\text{act}}$ a consistent primal vector. Then the blockwise Euclidean pairing over \emph{all} cells, active
    and shadow, equals the active-mesh pairing of the assembled functional:
    \begin{equation} \label{eq:shadow_pairing}
        \langle \tilde{f}, u \rangle
        \;=\;
        \langle \tilde{f}, \mathcal{G}_{\text{sh}} u_{\text{act}} \rangle
        \;=\;
        \langle \mathcal{G}_{\text{sh}}^T \tilde{f}, u_{\text{act}} \rangle .
    \end{equation}
\end{lemma}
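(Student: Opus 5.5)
The first equality in \eqref{eq:shadow_pairing} is just the hypothesis $u = \mathcal{G}_{\text{sh}} u_{\text{act}}$, so the content lies entirely in the second. I would prove it by writing the blockwise Euclidean pairing on $\mathbb{V}_{\text{ext}}$ as a sum over cells and regrouping it parent by parent. The extended cell set is the disjoint union of the active cells $\mathcal{T}_{\text{act}}$ and the shadow children $\text{ch}(K)$ of the shadow parents $K \in \mathcal{T}_{\text{sh}} \subset \mathcal{T}_{\text{act}}$. Every shadow has exactly one parent, and every parent is active. Hence
\begin{equation*}
    \langle \tilde{f}, \mathcal{G}_{\text{sh}} u_{\text{act}} \rangle
    = \sum_{K \in \mathcal{T}_{\text{act}}} \Bigl( \bigl\langle \tilde{f}|_K, (\mathcal{G}_{\text{sh}} u_{\text{act}})|_K \bigr\rangle
    + \sum_{K_c \in \text{ch}(K)} \bigl\langle \tilde{f}|_{K_c}, (\mathcal{G}_{\text{sh}} u_{\text{act}})|_{K_c} \bigr\rangle \Bigr),
\end{equation*}
where the inner sum is empty whenever $K$ casts no shadows.

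Next I would insert the definition \eqref{eq:shadow_gather} of the embedding. On active blocks it gives $u_{\text{act}}|_K$. On a shadow child it gives $(\bigotimes_i \mathcal{P}_{b_i}) u_{\text{act}}|_K$. In each shadow term I would then move the prolongation matrix across the finite-dimensional Euclidean block pairing as its transpose, $\langle \tilde{f}|_{K_c}, (\bigotimes_i \mathcal{P}_{b_i}) u_{\text{act}}|_K\rangle = \langle (\bigotimes_i \mathcal{P}_{b_i}^T)\tilde{f}|_{K_c}, u_{\text{act}}|_K\rangle$. This uses $(\bigotimes_i \mathcal{P}_{b_i})^T = \bigotimes_i \mathcal{P}_{b_i}^T$.

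Collecting the terms of each parent $K$ produces exactly $\langle \tilde{f}|_K + \sum_{K_c} (\bigotimes_i \mathcal{P}_{b_i}^T)\tilde{f}|_{K_c},\, u_{\text{act}}|_K \rangle$. By \eqref{eq:shadow_assembly} this is $\langle (\mathcal{G}_{\text{sh}}^T \tilde{f})|_K, u_{\text{act}}|_K \rangle$. Summing over $K \in \mathcal{T}_{\text{act}}$ gives the active-mesh pairing, which proves \eqref{eq:shadow_pairing}. Equivalently, the whole argument is the statement that the shadow assembly \eqref{eq:shadow_assembly} is the algebraic transpose of $\mathcal{G}_{\text{sh}}$ with respect to the blockwise Euclidean pairings on $\mathbb{V}_{\text{ext}}$ and $\mathbb{V}_{DG}$. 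I would state it in that form as well, since it is what lets the Krylov iteration treat $\tilde{A}$ through \eqref{eq:galerkin_identity} without assembling.

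There is no analytic difficulty here; the step that needs care is the bookkeeping of the extended index set. I must make sure the decomposition is genuinely disjoint: each shadow block is counted once, under a unique parent, and no active cell is also stored as a shadow. Under 2:1 balance a coarse cell is refined into shadows at most once, even if several of its faces border the refined region, so this holds. I must also check that the pairings used in \eqref{eq:shadow_assembly} and in the lemma are the same coefficient-wise Euclidean pairing (not a mass-weighted one), so that the transpose in the assembly is the plain matrix transpose. Once these are fixed, the identity is a finite rearrangement of sums.
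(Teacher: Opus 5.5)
Your proposal is correct and follows the paper's argument. The paper also proves the identity directly from the definition of the transpose, regrouping the shadow summands parent by parent via $\sum_c \langle \tilde{f}|_{K_c}, (\bigotimes_i \mathcal{P}_{b_i})\, u|_K \rangle = \langle \sum_c (\bigotimes_i \mathcal{P}_{b_i}^T)\, \tilde{f}|_{K_c}, u|_K \rangle$; your version only makes explicit the disjoint parent-wise decomposition of the extended cell set and the coefficient-wise Euclidean pairing, both of which the paper leaves implicit.
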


\begin{proof}
    Immediate from the definition of the transpose. The shadow summands pair $\tilde{f}|_{K_c}$ with the prolonged
    parent data, and $\sum_c \langle \tilde{f}|_{K_c},
        \mathcal{P}_K u|_K \rangle = \langle \sum_c \mathcal{P}_K^T \tilde{f}|_{K_c}, u|_K \rangle$ is the
    parent's share of the assembled pairing.
\end{proof}

% Paragraph: exact active energy pairing on extended storage.
In particular, for consistent $u, v$ and $\tilde{q} = \tilde{A} \mathcal{G}_{\text{sh}} u_{\text{act}}$,
combining~\eqref{eq:galerkin_identity} with~\eqref{eq:shadow_pairing} gives $\langle \tilde{q}, v \rangle = \langle A
    u_{\text{act}}, v_{\text{act}} \rangle$: the exact energy pairing of the active mesh, computed as one flat dot product
over the extended storage, with no shadow assembly, no communication, and no distinction between active and shadow
blocks.

% Paragraph: the Krylov loop on split data.
The resulting preconditioned conjugate gradient iteration operates on the shadowed storage. The iterate $u_k$, search
direction $p_k$, and preconditioned residual $z_k$ are consistent primal vectors, while the residual $\tilde{r}_k$ and
the operator output $\tilde{q}_k = \tilde{A} p_k$ are split duals. Every scalar of the iteration, including $\alpha_k =
    \langle \tilde{r}_k, z_k \rangle / \langle \tilde{q}_k, p_k \rangle$, $\beta_k = \langle \tilde{r}_{k+1}, z_{k+1}
    \rangle / \langle \tilde{r}_k, z_k \rangle$ and the monitored quantity $\langle \tilde{r}_k, z_k \rangle$, pairs one
dual with one primal vector and is therefore computed blockwise by Lemma~\ref{lem:shadow_pairing}. The vector updates
preserve both structures by linearity: $u_{k+1} = u_k + \alpha_k p_k$ is a linear combination of consistent vectors and
hence consistent because its shadows are automatically the prolongations of its parents, so no shadow refresh is needed
inside the loop, and $\tilde{r}_{k+1} = \tilde{r}_k - \alpha_k \tilde{q}_k$ is a linear combination of split duals
whose shadow assembly, by linearity of $\mathcal{G}_{\text{sh}}^T$, is the correct active-mesh residual. Operator
applications in the Krylov loop therefore end with the face sweeps and omit the shadow assembly of
Algorithm~\ref{alg:shadow_face_loop}.

% State the preconditioner conditions required for equivalence.
For the fixed symmetric positive-definite multigrid preconditioner, equivalence with preconditioned conjugate gradients
on the assembled active-mesh system requires two representation properties. First, its output $z = P(\tilde{r})$ must
be consistent. This requires one shadow refresh at the exit of the V-cycle, which can be fused with its final level
prolongation. No other shadow refresh is needed because the operator application and vector updates preserve
consistency. Second, the preconditioner must depend on the split residual only through its shadow assembly, i.e.,
\[
    P = \mathcal{G}_{\text{sh}} P_{\text{act}} \mathcal{G}_{\text{sh}}^T
\]
for some preconditioner $P_{\text{act}}$ of the active-mesh system. The local multigrid method of \secref{sec:local_mg}
satisfies this condition: its level restriction transfers the shadow-block contributions with the remaining level
residual into the coarser right-hand sides, realizing~\eqref{eq:shadow_assembly} level by level. Both conditions
therefore hold, and with the Galerkin identity~\eqref{eq:galerkin_identity} shadowed CG produces the same iterates as
preconditioned CG on the assembled active-mesh system.

% Identify the valid residual measure.
The Euclidean norm of a split dual vector is not the norm of the assembled residual; convergence must therefore be
monitored through $\langle \tilde{r}_k, z_k \rangle$ or another dual--primal pairing, not through $\|\tilde{r}_k\|_2$.

%%%%%%%%%%%%%%%%%%%%%%%%
% Local multigrid for DG
%%%%%%%%%%%%%%%%%%%%%%%%
\section{Local Multigrid with Adaptively Refined Meshes}
\label{sec:local_mg}

% Paragraph: solver and preconditioner.
We precondition the conjugate gradient method of \secref{sec:dg_pairing} by a geometric multigrid V-cycle on the mesh
refinement hierarchy, taken from~\cite{wichrowski2026MG}, where it is developed for continuous elements on persistent
cell-wise storage. Shadow cells represent the coupling at refinement edges within the operator's face loop.

\subsection{The Local Multigrid Method}
\label{sec:local_mg_intro}

% Paragraph: define the refinement tree, active mesh, and levels.
Recursive subdivision of Cartesian cells into $2^d$ children produces an octree refinement
tree~\cite{burstedde2011p4est,bangerth2012algorithms}. Its leaves form the \emph{active mesh}
$\mathcal{T}_{\text{act}}$, whose adjacent cells may belong to different levels. For $\ell\in\{0,\dots,L\}$, let
$\mathcal{T}_\ell$ contain the level-$\ell$ cells, active or further refined; the level meshes
$\mathcal{T}_0\sqsubset\cdots\sqsubset\mathcal{T}_L$ and their broken spaces are nested. The \emph{refinement edge}
$E_\ell$ separates level-$\ell$ cells from coarser active cells and carries both non-matching faces and inter-level
coupling.

% Paragraph: why global multigrid fails; local smoothing and its lineage.
Classical geometric multigrid smooths on a sequence of uniformly refined meshes~\cite{Hackbusch85,Bramble93}. Embedding
an adaptive mesh into such a hierarchy would make each level's cost depend on the globally refined mesh rather than the
locally refined region. Local multigrid instead smooths only the level-$\ell$ cells and leaves the rest of the domain
to coarser levels~\cite{brandt1977mlat,mccormick1986fac,JanssenKanschat11}. Matrix-free realizations exist for
continuous and discontinuous elements~\cite{kronbichler2019multigrid,fehn2020hybrid}. The work of a local V-cycle is
proportional to the number of cells summed over the adaptive levels.

% Paragraph: level decomposition and the iteration structure.
Because every active cell belongs to one level, the active broken space is the direct sum of per-level subspaces. No
DoF is shared between levels, unlike at refinement edges of conforming spaces. The level-$\ell$ right-hand side equals
the active residual on active level-$\ell$ cells and vanishes on refined cells. During descent, a shadow contribution
is added to the right-hand side of its coarser parent by shadow assembly, fused with the level restriction.
Algorithm~\ref{alg:dg_local_vcycle} summarizes the remaining smoothing, residual, level-restriction, coarse-correction,
and level-prolongation steps. On exit, level prolongation also refreshes the shadow layer to satisfy the consistency
condition of \secref{sec:dg_pairing}. Level prolongation applies $\bigotimes_i\mathcal{P}_{b_i}$ parent by parent, and
level restriction applies its transpose child by child. These are Galerkin transfers between nested broken spaces and
require neither constraints nor interface weighting.

\begin{algorithm}[H]
    \caption{Local multigrid $V$-cycle on level $\ell$: $MG_{\text{local}}(A_\ell, b_\ell, u_\ell, \ell)$.}
    \label{alg:dg_local_vcycle}
    \begin{algorithmic}[1]
        \Require Level $\ell$, level iterate $u_\ell$ (zero on entry), level right-hand side $b_\ell$ (carrying the active residual of level $\ell$).
        \If{$\ell = 0$}
        \State $u_0 \gets A_0^{-1} b_0$ \Comment{Coarse grid solve}
        \State \Return $u_0$
        \EndIf
        \State $u_\ell \gets \mathcal{S}_\ell^{\nu_1}(A_\ell, b_\ell, u_\ell)$ \Comment{Pre-smoothing on level-$\ell$ cells; shadows zero and frozen}
        \State $r_\ell \gets b_\ell - A_\ell u_\ell$ \Comment{Level residual}
        \State $b_{\ell-1} \gets b_{\ell-1} + \mathcal{R}_\ell^{\ell-1} r_\ell$ \Comment{Level restriction into the coarser right-hand side}
        \State $u_{\ell-1} \gets MG_{\text{local}}(A_{\ell-1}, b_{\ell-1}, u_{\ell-1}, \ell - 1)$ \Comment{Recursive coarse-grid correction}
        \State \label{ln:level_prolong} $u_\ell \gets u_\ell + \mathcal{P}_{\ell-1}^\ell u_{\ell-1}$ \Comment{Level prolongation; refreshes shadows}
        \State $u_\ell \gets \mathcal{S}_\ell^{\nu_2}(A_\ell, b_\ell, u_\ell)$ \Comment{Post-smoothing}
        \State \Return $u_\ell$
    \end{algorithmic}
\end{algorithm}

\subsection{Smoother}
\label{sec:mg_smoother}

% State the admissible smoother class and the choice used here.
The local V-cycle is not restricted to a particular smoother. It requires only that a level-$\ell$ smoothing step
update the level-$\ell$ unknowns while leaving the shadow values fixed. For simplicity, we use cell-wise block-Jacobi
in the spirit of~\cite{bastian2019matrix,WitteArndtKanschat21}, taking as the block the cell-diagonal $(p+1)^d \times
    (p+1)^d$ part of $A_\ell$. On a Cartesian cell that block inherits the tensor structure of the level operator: the two
faces of the cell normal to a given axis contribute on disjoint layers of that axis and under the mass matrix in the
other two, so the block is the Kronecker sum
\begin{equation} \label{eq:cell_block}
    D_K = \hat{A} \otimes M \otimes M + M \otimes \hat{A} \otimes M + M \otimes M \otimes \hat{A},
\end{equation}
with $\hat{A}$ the 1D stiffness matrix plus the own-side $2 \times 2$ sub-blocks of the two face stencils. Its inverse
is therefore available exactly by fast diagonalization~\cite{Lynch1964,deville2002highorder}: one generalized
eigenproblem $\hat{A} q = \lambda M q$ per level yields $D_K^{-1} = (Q \otimes Q \otimes Q)\, \Lambda^{-1}\, (Q \otimes
    Q \otimes Q)^T$ with $\Lambda_{ijk} = \lambda_i + \lambda_j + \lambda_k$, so an application is six one-dimensional
contractions and one diagonal scaling, with the same sum-factorized shape as the operator itself. The approximation lies in
the choice of block, not in its inversion: $D_\ell$ discards the off-cell face couplings, and on cells with a domain
boundary face we use the interior stencil in place of the Nitsche one, which a smoother is free to do.
Let $D_\ell^{-1}$ denote the resulting block-Jacobi preconditioner. Its application is cell-local and requires no
inter-cell communication; communication within the V-cycle occurs only through the face terms of the level operators
and the inter-level transfers.

% Derive the three Chebyshev weights from the target spectral interval.
We accelerate block-Jacobi by a Chebyshev polynomial over a target interval $[a,b]$ containing the relevant eigenvalues
of $D_\ell^{-1}A_\ell$, where $0<a<b$. The implementation admits an arbitrary polynomial degree; degree three gave the
best time to solution among the degrees we tried, and every result reported in \secref{sec:results} uses it, applied
once before and once after the coarse-grid correction ($\nu_1 = \nu_2 = 1$ in Algorithm~\ref{alg:dg_local_vcycle}). We
therefore state the construction for that degree.
%% NOTE: degree three is the measured configuration; the solver's own command-line default is 2. Do not
%% change the text to 2; fix the default instead.
Define
\begin{equation}
    c=\frac{a+b}{2},
    \qquad
    d=\frac{b-a}{2},
    \qquad
    \omega_j=
    \frac{1}{c-d\cos\!\left(\frac{(2j-1)\pi}{6}\right)},
    \quad j=1,2,3.
    \label{eq:chebyshev_weights}
\end{equation}
One smoothing application is implemented in factorized form as three nonstationary Richardson updates,
\begin{equation}
    u_\ell^{(j)} = u_\ell^{(j-1)}
    + \omega_j D_\ell^{-1}\bigl(b_\ell-A_\ell u_\ell^{(j-1)}\bigr),
    \qquad j=1,2,3.
    \label{eq:chebyshev_richardson}
\end{equation}

% Identify the resulting error polynomial and its smoothing bound.
For the initial error $e^{(0)}$, the three updates give
\begin{equation}
    e^{(3)}=p_3(D_\ell^{-1}A_\ell)e^{(0)},
    \qquad
    p_3(\lambda)
    =\prod_{j=1}^{3}(1-\omega_j\lambda)
    =\frac{T_3((c-\lambda)/d)}{T_3(c/d)},
    \label{eq:chebyshev_error_polynomial}
\end{equation}
where $T_3$ is the third Chebyshev polynomial. Hence
$\max_{\lambda\in[a,b]}|p_3(\lambda)|=|T_3(c/d)|^{-1}$. By comparison, three block-Jacobi steps using the same
optimal constant weight $2/(a+b)$ have the worst-case factor $(d/c)^3$. The varying weights therefore improve the
worst-case damping without changing the number of operator or block-inverse applications.

% Explain why the factorized realization has no additional vector traffic.
The factorized form in~\eqref{eq:chebyshev_richardson} deliberately avoids the auxiliary search-direction vector of a
three-term Chebyshev recurrence. It performs the same three operator applications, three block inversions, and three
solution--residual updates as three ordinary block-Jacobi Richardson steps; only the scalar weight changes between
stages. Thus Chebyshev acceleration introduces no additional full-vector stream. The three roots may be applied in an
order that limits intermediate residual growth; their order does not change the final polynomial in exact arithmetic.

\subsection{Treatment of the Refinement Edge}
\label{sec:mg_shadow_edge}

% Paragraph: the difficulty local smoothing faces at the refinement edge.
The cells adjacent to $E_\ell$ require coarse-side traces from outside the level-$\ell$ space. Classical local
smoothing treats interpolated coarse values as Dirichlet data; assembled implementations then split the level matrix
into interior and edge blocks and use dedicated edge operators for residual transfer~\cite{JanssenKanschat11}. Shadow
cells provide these data within the DG representation.

% Paragraph: the coarse side is frozen during level smoothing.
During smoothing on level $\ell$, the coarse-side data of the refinement edge do not change: the level-$\ell$ smoother
updates level-$\ell$ cells only, so the traces entering $A_\ell$ from across $E_\ell$ are constant throughout a
smoothing block, and the shadows carry them into the face loop of \secref{sec:faces} without masking or operator
splitting. Because the cycle runs in correction form, the value they are frozen at is not the coarse-side iterate but
the coarse-side \emph{correction} known so far, and it changes once per level visit. A level is entered with $u_\ell =
    0$, hence with zero shadows. This is the correct frozen value because no coarse correction has been computed yet, and
pre-smoothing runs against homogeneous inter-level Dirichlet data. Level prolongation then transfers the coarse-grid
correction onto level $\ell$, filling the shadows with the coarse-side share of that correction, and post-smoothing
runs against those values. This needs no dedicated step. The shadows are ordinary cells of the level, so
line~\ref{ln:level_prolong} of Algorithm~\ref{alg:dg_local_vcycle} fills them along with every other cell of the level.

% Paragraph: residual transfer through the shadows.
When the level residual $r_\ell=b_\ell-A_\ell u_\ell$ is evaluated after smoothing, the fine-shadow face kernels
accumulate flux contributions \emph{onto the shadow children}. These represent the couplings from the refined region
onto coarse-side test functions. Shadow assembly transfers them by the transposed embedding to the parent, where they
enter the coarser right-hand side with the restricted level residual. By Theorem~\ref{thm:shadow_exact}, these are the
non-matching face contributions of the active-mesh operator. The refinement edge therefore uses the shadow-embedding,
face-exchange, and shadow-assembly kernels.

% Paragraph: shadows are a property of the edge, shared by all level operators.
The shadow layer belongs to the refinement edge rather than to any one operator, and the active-mesh operator and all
level operators share it. What its blocks hold depends on the caller: for the Krylov operator they carry the prolonged
active solution, for a level operator the frozen coarse-side correction of that level visit. The contributions
accumulated on them undergo shadow assembly only in multigrid, fused with the level restriction; the Krylov loop leaves
them split, as \secref{sec:dg_pairing} permits.

%% NOTE: numerical results are in splitted/dg_results.tex, included by main_dg_shadow.tex.

%%%%%%%%%%%%%%%%%%%%%%%%
% Numerical results for the Laplace operator.
%%%%%%%%%%%%%%%%%%%%%%%%
%%%%%%%%%%%%%%%%%%%%%%%%
% Numerical results
%%%%%%%%%%%%%%%%%%%%%%%%
\section{Numerical Results}
\label{sec:results}

% State the questions addressed by the experiments.
The experiments address three questions. First, we examine how the penalty parameter and local refinement affect the
convergence of the multigrid-preconditioned solver. Second, we determine whether the volume, face, transfer, and
smoothing kernels operate near the hardware limits. Third, we measure how the complete operator and solver respond when
the number of local refinement levels increases. All experiments use polynomial degree $p=3$.

\subsection{Experimental Settings}
\label{sec:dg_results_settings}

% Define the hardware and solver settings.
All measurements are performed in FP64 on an NVIDIA A100 80\,GB SXM GPU. The V-cycle applies one pre-smoothing and one
post-smoothing application of the degree-three Chebyshev-accelerated block-Jacobi method described in
\secref{sec:mg_smoother}; each application comprises three Richardson updates with varying weights for the spectral
interval $[\lambda_{\min},\lambda_{\max}]=[0.2,2]$. The coarsest level is a single cell, i.e.\ $(p+1)^d=64$ unknowns,
and its system is solved exactly by a dense LU factorization assembled once by probing the level operator with unit
vectors. CG terminates when the preconditioned residual norm $\langle\tilde{r}_k,z_k\rangle^{1/2}$, the pairing-based
measure required by \secref{sec:dg_pairing}, has decreased by a factor $10^{-10}$.

% Define all reported rates and the DoF count used to normalize them.
Let $N_{\mathrm{dof}}=(p+1)^d N_{\mathrm{act}}$ denote the number of degrees of freedom on the active DG cells; shadow
degrees of freedom are excluded because they represent coarse-cell data rather than additional unknowns. For a timed
operation with wall time $t$, its throughput is $N_{\mathrm{dof}}/t$. We report the throughput of one complete SIPG
operator application $Au$, one block-Jacobi update, and the complete CG solve. The solve throughput is
$N_{\mathrm{dof}}/t_{\mathrm{solve}}$ and must therefore be read together with the iteration count.

% Define a refinement sequence that isolates the effect of adaptive depth.
We solve the Poisson problem on the Cartesian unit cube $\Omega=(0,1)^d$ with the manufactured solution $u(x)=\sin(\pi
    x_1)\sin(\pi x_2)\sin(\pi x_3)$, whose homogeneous Dirichlet data match the boundary condition of~\eqref{eq:sipg}. Each
mesh is identified by the number $n_{\mathrm{uni}}$ of uniform refinements followed by the number $n_{\mathrm{adap}}$
of nested local refinements. In each adaptive step, cells intersecting the radial shell $0.2<\lVert x-x_c\rVert_2<0.5$,
where $x_c=(1/2,\ldots,1/2)$ is the center of the domain, are marked for refinement. The case $n_{\mathrm{adap}}=0$ is
the uniformly refined reference mesh. For each adaptive depth, $n_{\mathrm{uni}}$ is chosen so that the largest level
contains enough cells to saturate the GPU while the total allocation remains below device capacity. The resulting mesh
is 2:1 balanced, as assumed in \secref{sec:shadow}.

% Show representative slices of the adaptive refinement sequence.
Figure~\ref{fig:dg_adaptive_meshes} shows planar slices through three meshes in the adaptive sequence. Identical view
limits and coloring distinguish the coarse and locally refined active cells, while dashed outlines show the shadow
layer generated along each refinement edge.

\begin{figure}[htbp]
    \centering
    \begin{subfigure}[b]{0.315\linewidth}
        \centering
        \includegraphics[width=\linewidth]{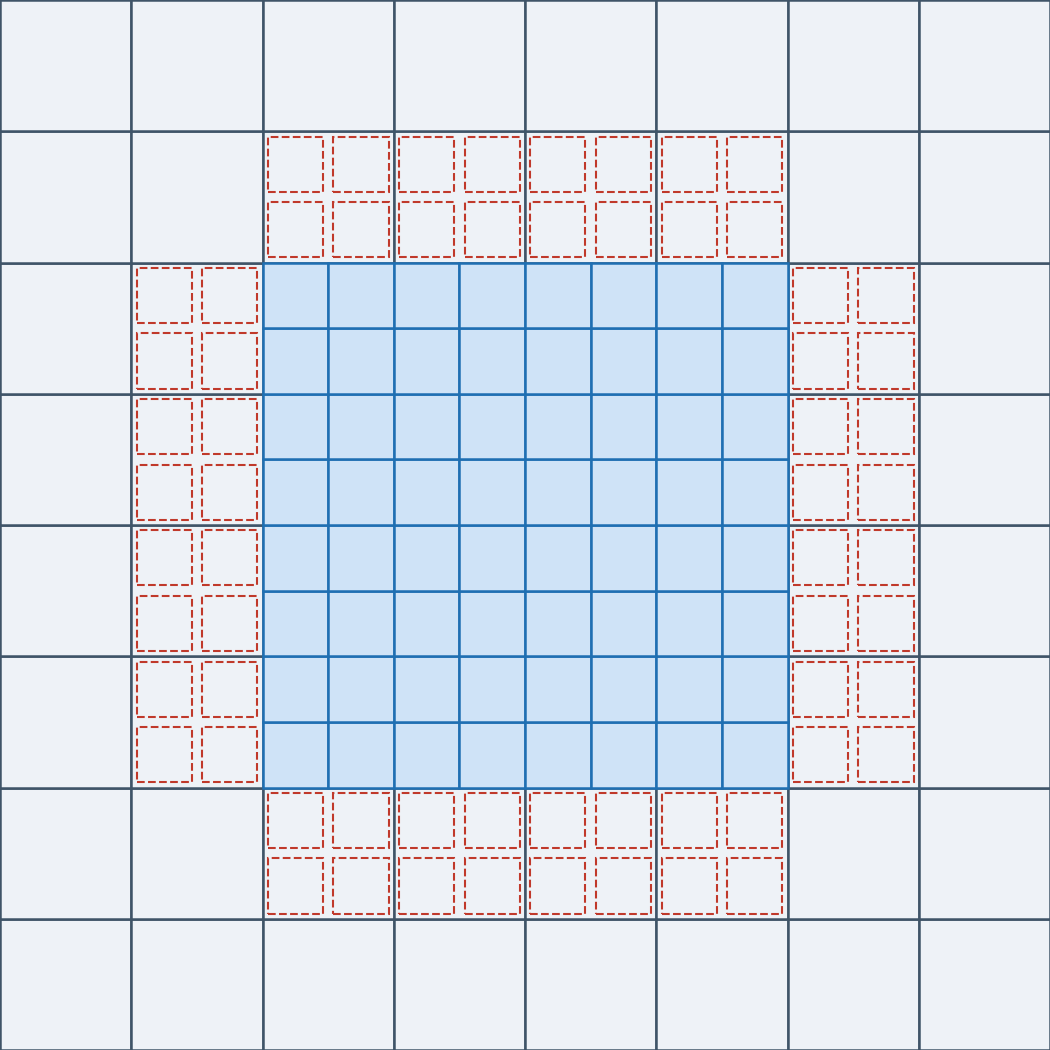}
        \caption{$n_{\mathrm{adap}}=1$.}
        \label{fig:dg_mesh_local_1}
    \end{subfigure}
    \hfill
    \begin{subfigure}[b]{0.315\linewidth}
        \centering
        \includegraphics[width=\linewidth]{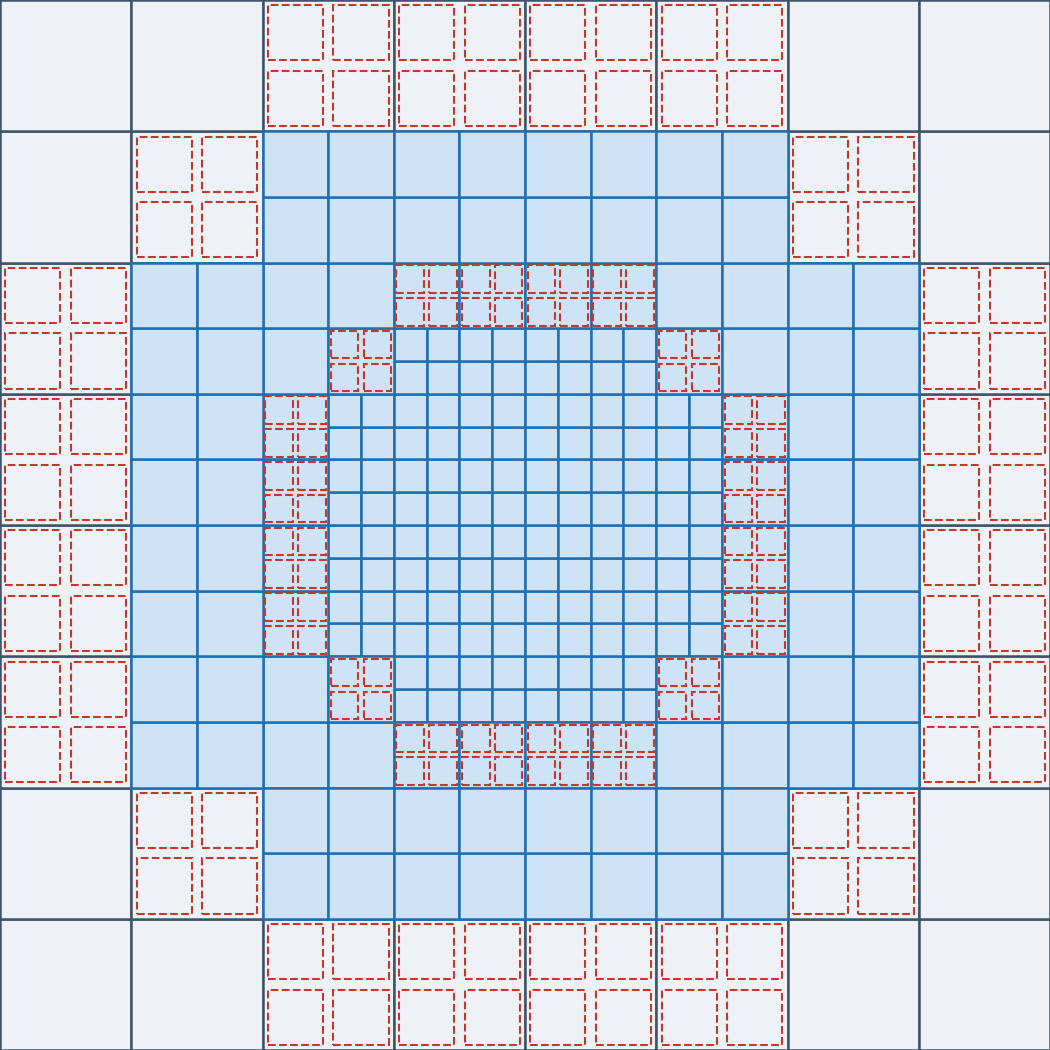}
        \caption{$n_{\mathrm{adap}}=2$.}
        \label{fig:dg_mesh_local_2}
    \end{subfigure}
    \hfill
    \begin{subfigure}[b]{0.315\linewidth}
        \centering
        \includegraphics[width=\linewidth]{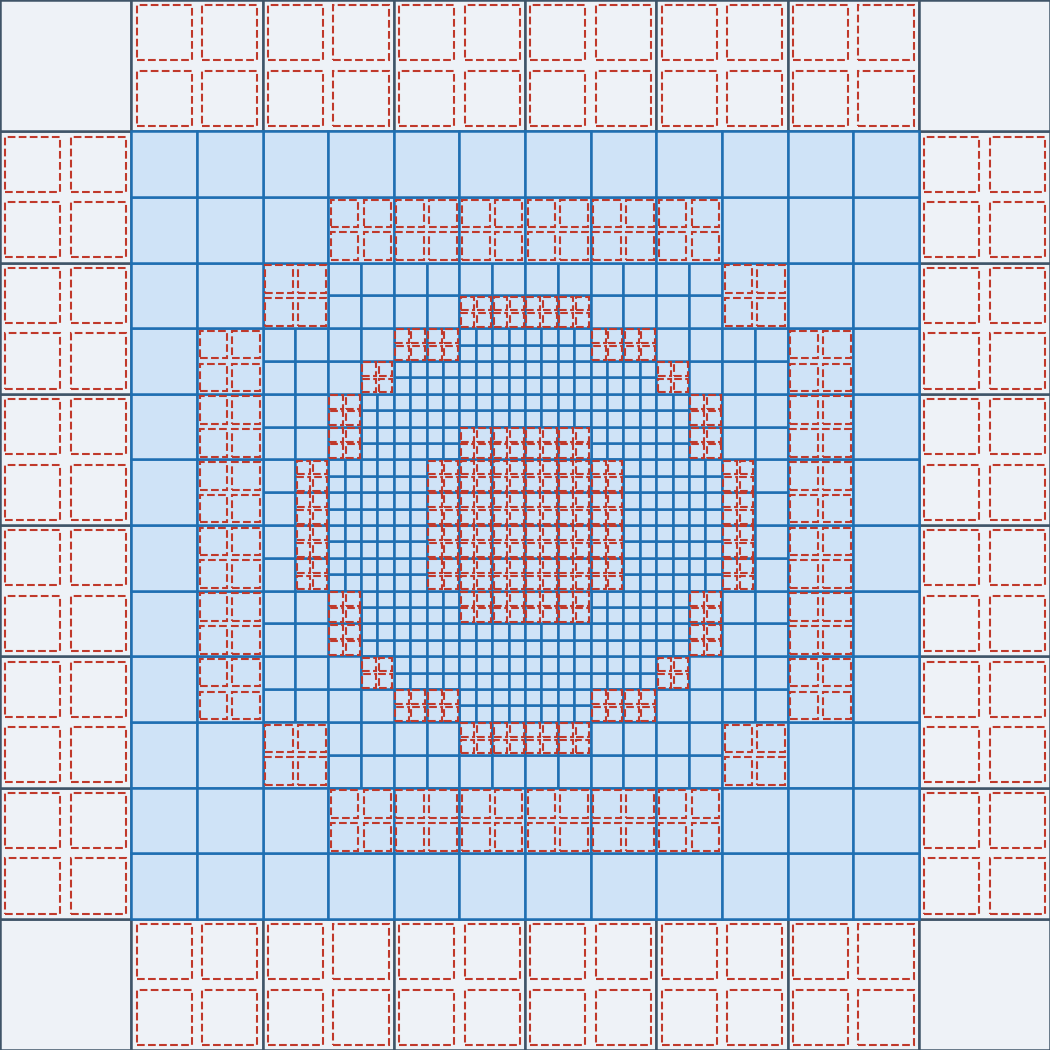}
        \caption{$n_{\mathrm{adap}}=3$.}
        \label{fig:dg_mesh_local_3}
    \end{subfigure}
    \par\medskip
    % Shared legend for the adaptive-mesh slice images.
\definecolor{dgcoarsefill}{RGB}{238,242,247}
\definecolor{dgcoarseedge}{RGB}{63,84,104}
\definecolor{dgrefinedfill}{RGB}{207,227,247}
\definecolor{dgrefinededge}{RGB}{31,111,178}
\definecolor{dgshadowedge}{RGB}{192,57,43}
\begin{tikzpicture}[baseline=(current bounding box.center), font=\small]
    \filldraw[fill=dgcoarsefill, draw=dgcoarseedge, line width=0.8pt]
        (0,0) rectangle (0.38,0.24);
    \node[anchor=west] at (0.50,0.12) {coarse active cell};

    \filldraw[fill=dgrefinedfill, draw=dgrefinededge, line width=0.8pt]
        (3.65,0) rectangle (4.03,0.24);
    \node[anchor=west] at (4.15,0.12) {locally refined active cell};

    \draw[dgshadowedge, dashed, line width=0.9pt]
        (8.65,0) rectangle (9.03,0.24);
    \node[anchor=west] at (9.15,0.12) {shadow cell};
\end{tikzpicture}
    \caption{Slices at $z=0.3$ through the locally refined Cartesian meshes used in the adaptive-depth experiment. All
        meshes have three global refinements ($n_{\mathrm{uni}}=3$); the local refinement depth is given below each
        panel. Solid rectangles are active cells, and dashed red rectangles are shadow cells. All panels use the same
        view limits and colors.}
    \label{fig:dg_adaptive_meshes}
\end{figure}

\subsection{Penalty Parameter and Solver Convergence}
\label{sec:dg_results_verification}

% Note the penalty a refinement edge inherits from the fine level.
The penalty parameter is $\sigma_F=C\,p(p+1)/h_F$ with $C=2$, i.e.\ $\sigma_F=24/h_F$ at $p=3$; the value $C=1$ is
close to the coercivity threshold of the form, where the coarse-level system becomes numerically singular. Note that on
a fine--shadow face both sides are cells of the fine level, so the face is penalized at the fine cell size, $\sigma_F =
    C\,p(p+1)/h_{\mathrm{fine}}$. The penalty of a non-matching face is thus dictated by the fine trace inverse inequality,
which means that the faces along a refinement edge carry twice the penalty of the coarse-side faces adjacent to them,
and the coupling across the edge is correspondingly stiffer than elsewhere on the coarse level. This stiffening is a
property of the refinement edge itself, not of how deeply the mesh is refined.

% Report convergence across adaptive depths.
CG converges in between $10$ and $12$ iterations over the complete refinement sequence, corresponding to an average
reduction of $\langle\tilde{r}_k,z_k\rangle^{1/2}$ per iteration of between $0.10$ and $0.15$. The count rises from
$10$ to $12$ at the first local refinement and is then independent of the adaptive depth
(Table~\ref{tab:dg_adaptive_throughput}): introducing a refinement edge costs two iterations, and deepening the
refinement costs none. We suspect that these two iterations are due to the doubled penalty the edge inherits from the
fine level, as discussed in the paragraph above.

\subsection{The Face Kernel}
\label{sec:dg_results_face_kernel}
%%%% NOTE: the benchmark calls the selected kernel `narrow`; the paper calls it `coalesced` to emphasize its relevant distinction from the other reported variants.

% State the memory operations of a face pass, which define the arithmetic-free references.
We compare two orderings of the face loop and then tune the storage block size $B$ for the one that is selected. For
one interior face, the kernel reads the value and normal-derivative layers from both adjacent cells, four
$(p+1)\times(p+1)$ tangential slabs, and accumulates into the same four layers of the output, which adds one read and
one write per entry. These four reads and four read--modify--writes are the memory operations that the arithmetic-free
references below retain. The bandwidth in Table~\ref{tab:dg_face_variants} is the DRAM throughput measured by NVIDIA
Nsight Compute.

% Exploit the rank-two structure of the normal flux coupling.
Both orderings exploit the algebraic rank of the normal coupling. With the trace data of a face ordered as
$z=(\partial_n u^-,u^-,u^+,\partial_n u^+)^T$, the jump and the average are extracted by
\begin{equation}
    j=(0,1,-1,0)^T,
    \qquad
    m=\left(\tfrac12,0,0,\tfrac12\right)^T.
\end{equation}
Apart from level-dependent geometric scaling, the SIPG flux matrix has the form
\begin{equation}
    F=-m j^T-j m^T+\sigma j j^T,
    \qquad \operatorname{range}(F)\subseteq\operatorname{span}\{j,m\}.
    \label{eq:dg_face_rank_two}
\end{equation}
Forming the jump $j^Tz$ and the average $m^Tz$ first therefore applies the tangential operator twice rather than once
per output row, four one-dimensional contractions per face instead of eight. The two kernels compared below share this factorization and differ in the order in
which they traverse the face loop. The face-major kernel uses the natural order: faces are numbered as the mesh
enumerates them and one thread block processes one face, so the face index is the slowest-varying dimension of the
data each block touches and the tangential indices are the fastest. It moves the adjacent value and derivative layers
of each cell as one depth-two tile, which is the fastest face-major arrangement we measured.

% Explain why the arithmetic-free memory-access reference can still be slow.
The \texttt{stream} measurement is an arithmetic-free reference for the original face-major access pattern. It performs
no contractions and does not evaluate the operator. In the blocked field layout, the cell lane is the fastest-varying
address, whereas the face index is the slowest axis of the reference tile; consequently, the threads of a warp span the
tangential indices of a single cell and are $8B$ bytes apart in memory. Its throughput is an upper bound imposed by
this addressing pattern alone, not by the flux arithmetic.

% Describe the coalesced ordering and tensor-core contractions.
The \texttt{coalesced} kernel instead sorts faces by the pair of adjacent memory blocks and then by the lane within the
block. It places the face index in the fastest dimension of a $(p+1)\times(p+1)\times B_F$ tile, so consecutive threads
access consecutive cell lanes. This is the one direction in which the blocked layout of \secref{sec:dg_cellwise} is
contiguous: for a fixed trace entry, the $B$ cells of a tile hold that entry in $8B$ consecutive bytes. A warp
therefore reads a single trace entry across consecutive lanes, and its $32$ requests fall into a few fully consumed
memory sectors, whereas the face-major ordering spreads the same $32$ requests over as many sectors of which one word
each is used. The two tangential contractions are grouped across the face batch into dense matrix multiplications and
executed on the FP64 tensor cores. Padding is applied only to the matrix dimension required by the hardware, while the
trace data retain their natural $4\times4$ extent. The transposed intermediate is written by swapping the two
tangential strides, avoiding an explicit permutation. The selected launch groups $B_F=64$ faces and uses two warps. An
arithmetic-free \texttt{stream} reference retains this ordering and the same memory operations but omits the
contractions, measuring the throughput ceiling of the selected access pattern.

% Quantify what the atomic-free sweep structure buys.
A face-centric loop writes both adjacent cells and therefore ordinarily requires atomics or face coloring. The
dimension-by-dimension sweep of \secref{sec:faces} needs neither: within the sweep for one orientation, each output
trace layer is written by exactly one interior face, and the three sweeps execute successively. The \texttt{atomic}
probe measures what this design decision is worth. It replaces the ordinary read--modify--write updates of the
\texttt{coalesced} kernel by FP64 atomic additions while leaving its contractions and addressing unchanged. On the
profiled $64^3$ grid, the operator throughput decreases from $17.44$ to $9.97$\,GDoF/s and the measured DRAM bandwidth
from $1230.1$ to $698.5$\,GB/s. Atomic accumulation would thus discard $42.8$\% of the attainable throughput, so
avoiding it is not a detail of the implementation but one of the larger single effects reported here.

% Compare the face-major and coalesced kernels with their memory references.
The face-major kernel reaches $5.60$\,GDoF/s on the larger grid. The face-major evaluation retains $93.0$\% of its
\texttt{stream} reference, showing that arithmetic adds little cost once the four layers are moved efficiently within
each face. The \texttt{coalesced} kernel reaches $22.67$\,GDoF/s and $1650.8$\,GB/s, corresponding to $81$\% of the
peak memory bandwidth and exceeding the face-major \texttt{stream} reference by a factor $3.77$. This comparison shows
that the memory-access order is more consequential than removing the face arithmetic. On the larger grid, its
\texttt{stream} reference reaches $23.65$\,GDoF/s, so the complete evaluation retains $95.9$\% of the throughput of its
arithmetic-free reference. All operator variants agree with the independent reference to a maximum relative difference
of $1.79\times10^{-16}$. The \texttt{coalesced} kernel is used in all remaining experiments.

\begin{table}[htbp]
    \centering
    % NVIDIA Nsight Compute measurements: raw_results/dg/interior_face.output (A100 80GB).
\begin{adjustbox}{max width=\linewidth}
    \begin{tabular}{@{}llccccc@{}}
        \toprule
        {Cells} & {Metric}
                & \multicolumn{2}{c}{\textbf{Face-major}}
                & \multicolumn{3}{c}{\textbf{Coalesced}}                                                                          \\
        \cmidrule(lr){3-4}\cmidrule(lr){5-7}
                & & {Evaluation} & {\texttt{stream}} & {Evaluation} & {\texttt{stream}} & {\texttt{atomic}} \\
        \midrule
        \multirow{2}{*}{$64^3$}
                & {Throughput [GDoF/s]}
                & {5.77}                                  & {6.11}       & {\textbf{23.14}}  & {24.71}      & {9.97}              \\
                & {DRAM bandwidth [GB/s]}
                & {413.8}                                 & {435.9}      & {\textbf{1623.4}} & {1722.6}     & {698.5}             \\
        \midrule
        \multirow{2}{*}{$128^3$}
                & {Throughput [GDoF/s]}
                & {5.60}                                  & {6.02}       & {\textbf{22.67}}  & {23.65}      & {10.06}             \\
                & {DRAM bandwidth [GB/s]}
                & {414.1}                                 & {442.6}      & {\textbf{1650.8}} & { 1736.8 }   & {731.6}             \\
        \bottomrule
    \end{tabular}
\end{adjustbox}

    \caption{Interior-face kernel variants and memory-access references on uniformly refined Cartesian cubes with
        $64^3$ and $128^3$ cells ($16\,777\,216$ and $134\,217\,728$ DoFs; A100 80\,GB SXM,
        FP64). Each evaluation is paired with an arithmetic-free reference using the same memory-access
        pattern. The two evaluations differ only in the order in which they
        traverse the face loop. The references omit the contractions and do not evaluate the operator;
        the \texttt{atomic} column is an exact \texttt{coalesced} evaluation using FP64 atomic additions. GDoF/s is
        derived from measured kernel time, and the bandwidth is the DRAM throughput reported by NVIDIA Nsight Compute.
        The \texttt{coalesced} evaluation is used by the DG operator.}
    \label{tab:dg_face_variants}
\end{table}

% Tune the block size for the selected ordering; measurements: raw_results/sweep_p3.csv and raw_results/dg/rsweep_face.output, lines 1--105.
With the ordering fixed, the storage block size $B$ remains, and it is shared with the volume kernel.
Figure~\ref{fig:dg_layout_throughput} measures its effect on both degree-three kernels. The volume kernel is
comparatively insensitive to the layout: its throughput varies from $86.48$ to $96.12$\,GDoF/s over the measured block
sizes. In contrast, the face rate increases from $14.98$\,GDoF/s at $B=1$ to a maximum of $22.38$\,GDoF/s at $B=32$, an
improvement of $49.4$\%. At $B=32$, the volume kernel retains $96.9$\% of its maximum throughput. Thus, the blocked
layout materially improves face evaluation without sacrificing the volume evaluation. The sweep uses the $128^3$ grid
of Table~\ref{tab:dg_face_variants}, and its maximum agrees with the $22.67$\,GDoF/s reported there to within one
percent.

\begin{figure}[htbp]
    \centering
    \begin{minipage}{0.49\linewidth}
        \centering
        \makebox[\linewidth][c]{% Block-size sweep for the degree-three volume and coalesced interior-face kernels.
% Volume source: raw_results/sweep_p3.csv, columns block_data and vol_gdofs.
% Face source: raw_results/dg/rsweep_face.output, lines 1--105; benchmark variant `narrow`.
\begin{tikzpicture}
    \begin{loglogaxis}[
            log basis x=2,
            log basis y=2,
            width=\linewidth,
            height=0.75\linewidth,
            xlabel={Block size $B$},
            ylabel={Throughput [GDoF/s]},
            xmin=0.8, xmax=160,
            ymin=8, ymax=128,
            xtick={1,2,4,8,16,32,64,128},
            xticklabels={1,2,4,8,16,32,64,128},
            ytick={8,16,32,64,128},
            yticklabels={8,16,32,64,128},
            grid=both,
            legend style={font=\scriptsize, at={(0.97,0.7)}, anchor=north east},
            legend cell align=left,
        ]
        % Volume throughput; column index 3 is vol_gdofs.
        \addplot[orange, mark=triangle*]
        table[col sep=comma, x index=0, y index=3] {raw_results/sweep_p3.csv};

        % Coalesced face throughput; `narrow` in the benchmark output.
        \addplot[blue, mark=square*, dashed] table {
                B   throughput
                1   14.98
                2   15.88
                4   20.52
                8   20.43
                16  22.08
                32  22.38
                64  22.21
                128 21.91
            };
        \legend{Volume, Face evaluation}
    \end{loglogaxis}
\end{tikzpicture}}
    \end{minipage}
    \caption{Effect of the storage block size $B$ on the degree-three Cartesian volume and \texttt{coalesced} interior-face
        kernels on the A100 80\,GB SXM in FP64. Solid and dashed lines denote the volume and face kernels,
        respectively.}
    \label{fig:dg_layout_throughput}
\end{figure}

\subsection{Roofline Analysis}
\label{sec:dg_results_roofline}

% Explain the scope and normalization of the roofline experiment.
Figure~\ref{fig:dg_roofline} places the kernels used by one V-cycle on the A100 roofline. The volume point is the same
seven-contraction Cartesian Laplace kernel as in~\eqref{eq:eval_pipeline}. The level-prolongation and level-restriction
points are the tensor-product transfers used by the multigrid hierarchy. The remaining points measure the block-Jacobi
smoother and the face kernel.

% State the tensor-core instruction the dense contractions rely on.
The dense contractions of the volume kernel, the smoother, and the batched face contractions are issued as FP64
matrix-multiply-accumulate (MMA) instructions of shape $8\times8\times4$, the only FP64 tensor-core shape the A100
provides. The dashed ceiling of Figure~\ref{fig:dg_roofline} is the peak rate of that instruction. Its fixed $8\times8$
operand shape is also what forces the padding noted below: at $p=3$ the natural tile extent is $(p+1)=4$, half of the
instruction's eight-wide operand.

% Explain the hardware-counter operation counts and tensor-core padding.
The plotted FLOP rates and arithmetic intensities are hardware-counter measurements collected with NVIDIA Nsight
Compute through \texttt{ncu}, so the operation counts include arithmetic that the operator does not use. Two effects
account for this. In the smoother, the $(p+1)=4$ operand fills only half of the $8\times8\times4$ tile, so half of the
issued MMA arithmetic is useful. In the volume kernel, the split cascade issues ten one-dimensional contractions but
consumes seven of them, so $70$\% of its arithmetic is useful. The FLOP rates are therefore upper bounds, while the
measured bandwidth is not affected.

\begin{figure}[htbp]
    \centering
    % Roofline inputs and provenance for the p=3 DG experiment.
% Volume: p=3 row of raw_results/laplace_roofline.csv.
% Smoother: NVIDIA Nsight Compute measurement of fdm_triton.
% Transfers: p=3 entries of the measured transfer series in fig_mg_roofline.tex.
% Face: narrow_r6 row of raw_results/dg/interior_face.output.
\def\DGFaceAI{1.40}
\def\DGFaceTF{2.200}

\begin{tikzpicture}
    \begin{loglogaxis}[
            width=0.62\linewidth,
            height=0.43\linewidth,
            xlabel={Arithmetic intensity [FLOP/byte]},
            ylabel={Performance [TFLOP/s]},
            log basis x=2,
            xmin=0.25, xmax=16,
            xtick={0.25,0.5,1,2,4,8,16},
            xticklabels={$\frac{1}{4}$,$\frac{1}{2}$,1,2,4,8,16},
            log basis y=2,
            ymin=0.25, ymax=32,
            ytick={0.25,0.5,1,2,4,8,16,32},
            yticklabels={$\frac{1}{4}$,$\frac{1}{2}$,1,2,4,8,16,32},
            grid=both,
            legend pos=south east,
            legend columns=1,
            legend cell align=left,
            legend style={font=\small},
        ]
        % Memory-bound and vector-compute regions.
        \addplot[green, fill=green, draw=none, fill opacity=0.3, forget plot]
        coordinates {(0.001,0.002039) (4.76,9.7) (1000,9.7) (1000,0.0001) (0.001,0.0001)};
        \addplot[cyan, fill=cyan, draw=none, fill opacity=0.3, forget plot]
        coordinates {(4.76,9.7) (9.56,19.5) (1000,19.5) (1000,9.7)};

        % A100 80GB SXM rooflines in fp64.
        \addplot[thick, domain=0.001:9.56, samples=2, forget plot] {2.039*x};
        \addplot[thick, domain=0.001:1000, samples=2, forget plot] {9.7};
        \addplot[thick, dashed, domain=0.001:1000, samples=2, forget plot] {19.5};

        % Hardware-ceiling annotations, matching the rooflines in the related papers.
        \node[rotate=29.3, above, font=\small\bfseries] at (axis cs:0.72,1.47)
        {Bandwidth 2039 GB/s};
        \node[above right, font=\small\bfseries] at (axis cs:0.26,9.7)
        {CUDA 9.7 TFLOP/s};
        \node[above right, font=\small\bfseries] at (axis cs:0.26,19.5)
        {Tensor 19.5 TFLOP/s};

        % Operator kernels: volume and interior faces use circular markers.
        \addplot[black, mark=*, mark size=3pt, only marks] coordinates {(5.000,8.140)};
        \addlegendentry{Volume}
        \addplot[violet, mark=o, mark options={fill=none, line width=1.2pt}, mark size=3pt, only marks]
        coordinates {(\DGFaceAI,\DGFaceTF)};
        \addlegendentry{Face}

        % Smoother kernel: the block-Jacobi inverse uses a star marker.
        \addplot[blue, mark=star, mark options={fill=none, line width=1.2pt}, mark size=3.5pt, only marks]
        coordinates {(4.300,6.481)};
        \addlegendentry{Blockwise-Jacobi}

        % Multigrid transfers: level restriction and level prolongation use square markers.
        \addplot[red, mark=square*, mark size=4pt, mark options={line width=1pt}, only marks]
        coordinates {(1.714,2.117)};
        \addlegendentry{Level restriction}
        \addplot[orange, mark=square, mark options={fill=none, line width=1.2pt}, mark size=4pt, only marks]
        coordinates {(0.888,1.445)};
        \addlegendentry{Level prolongation}
    \end{loglogaxis}
\end{tikzpicture}
    \caption{
        Roofline of the DG and multigrid building blocks on the A100 80\,GB SXM in FP64. Markers denote
        measurements of the Cartesian volume kernel, face kernel, block-Jacobi smoother, and geometry-independent
        level-transfer kernels. Solid lines are the memory-bandwidth slope (2039\,GB/s) and vector FP64 ceiling
        (9.7\,TFLOP/s); the dashed line is the tensor-core FP64 ceiling (19.5\,TFLOP/s).}
    \label{fig:dg_roofline}
\end{figure}

% Interpret the measured kernel positions on the roofline.
The volume kernel reaches $8.14$\,TFLOP/s at an arithmetic intensity of $5.00$\,FLOP/byte. Level restriction and level
prolongation reach $2.12$ and $1.45$\,TFLOP/s, respectively, while the smoother and face kernels reach $6.48$ and
$2.20$\,TFLOP/s. The face kernel sits in the memory-bound region. As the grid size increases and the device approaches
saturation, its measured DRAM bandwidth reaches $1650.8$\,GB/s on the finest mesh, or $81$\% of the peak memory
bandwidth. Neither compute ceiling is the relevant limit for this kernel, and for the same reason the unused arithmetic
above costs no performance: the kernels are bound by device memory, not by the arithmetic they issue.

\subsection{Operator and Solve Throughput}
\label{sec:dg_results_throughput}

% Introduce and interpret the global-refinement experiment; measurements: raw_results/dg/thougputs.output, lines 5--113.
We first measure globally refined meshes to establish the problem sizes required to saturate the GPU and to provide a
uniform-mesh reference for the local-refinement experiment. Table~\ref{tab:dg_global_throughput} reports the complete
$Au$ application, block-Jacobi update, and CG solve. The kernel throughputs increase rapidly up to $n_{\mathrm{uni}}=6$
and improve more moderately on the largest mesh: from $n_{\mathrm{uni}}=6$ to 7, the $Au$ and block-Jacobi rates
increase from $15.55$ to $17.36$\,GDoF/s and from $9.90$ to $10.56$\,GDoF/s, respectively. Across the global-refinement
sequence, CG requires 10--11 iterations and the solve throughput rises from $3.3$ to $89.6$\,MDoF/s. For reference, the
multilevel interior penalty solver of~\cite{cui2025multilevel} reports $61.86$\,MDoF/s at $p=3$ on the same device, for
a relative tolerance of $10^{-8}$ against the $10^{-10}$ used here. That solver uses a vertex-patch smoother, which
converges in fewer iterations than the cell-wise block-Jacobi smoother of \secref{sec:mg_smoother}; an optimized patch
smoother would therefore be expected to outperform the present configuration. Nothing in the shadow construction
prevents one, and matrix-free local solvers for such patches are available~\cite{wichrowski2025local}: the smoother
only has to update the level-$\ell$ unknowns and leave the shadows frozen (\secref{sec:mg_smoother}), and a patch
smoother on the blocked layout satisfies this as readily as the cell-wise one.

\begin{table}[htbp]
    \centering
    % Source: raw_results/dg/thougputs.output, lines 5--15, 38--47, 72--80, and 106--113.
% Throughput under global refinement; N_dof counts active DG unknowns only.
\begin{tabular}{@{}c|rr|rrr@{}}
    \toprule
    {$n_{\mathrm{uni}}$} & {Cells} & {CG it.} & {$Au$} & {Smoother} & {Solve} \\
                         &         &          & \multicolumn{2}{c}{[GDoF/s]} & {[MDoF/s]} \\
    \cmidrule(lr){4-5}
    4 & 4,096     & 11 & 1.63  & 1.27  & 3.3 \\
    5 & 32,768    & 10 & 10.61 & 7.67  & 22.4 \\
    6 & 262,144   & 10 & 15.55 & 9.90  & 74.4 \\
    7 & 2,097,152 & 10 & 17.36 & 10.56 & 89.6 \\
    \bottomrule
\end{tabular}

    \caption{Throughput of the SIPG operator, block-Jacobi update, and multigrid-preconditioned CG solver under global
        refinement (A100 80\,GB SXM, FP64). The operator and smoother rates are reported in GDoF/s, while the
        complete-solve rate is reported in MDoF/s.}
    \label{tab:dg_global_throughput}
\end{table}

% Introduce the local-refinement experiment; measurements: raw_results/dg/thougputs.output, lines 38--47 and 140--245.
We next fix $n_{\mathrm{uni}}=6$ and increase only the local refinement depth. The $n_{\mathrm{adap}}=0$ row in
Table~\ref{tab:dg_adaptive_throughput} is therefore the corresponding uniform-mesh result from
Table~\ref{tab:dg_global_throughput}.

\begin{table}[htbp]
    \centering
    % Source: raw_results/dg/thougputs.output, lines 38--47, 140--150, 170--181, 199--211, and 232--245.
% Throughput under local refinement; N_dof counts active DG unknowns only.
\begin{tabular}{@{}cc|rr|rrr@{}}
    \toprule
    {$n_{\mathrm{uni}}$} & {$n_{\mathrm{adap}}$} & {Cells} & {CG it.} & {$Au$} & {Smoother} & {Solve} \\
                         &                       &         &          & \multicolumn{2}{c}{[GDoF/s]} & {[MDoF/s]} \\
    \cmidrule(lr){5-6}
    6 & 0 & 262,144   & 10 & 15.55 & 9.90 & 74.4 \\
    6 & 1 & 295,632   & 12 & 13.66 & 5.11 & 57.5 \\
    6 & 2 & 465,256   & 12 & 12.48 & 5.32 & 48.4 \\
    6 & 3 & 1,158,200 & 12 & 11.53 & 5.43 & 42.2 \\
    6 & 4 & 3,936,304 & 12 & 11.12 & 5.44 & 39.5 \\
    \bottomrule
\end{tabular}

    \caption{Throughput of the SIPG operator, block-Jacobi update, and local multigrid-preconditioned CG solver under
        local refinement with $n_{\mathrm{uni}}=6$ (A100 80\,GB SXM, FP64). The operator and smoother rates
        are reported in GDoF/s, while the complete-solve rate is reported in MDoF/s. All rates are normalized by active
        DoFs; shadow DoFs are excluded from $N_{\mathrm{dof}}$. The $n_{\mathrm{adap}}=0$ row is the uniform reference
        mesh.}
    \label{tab:dg_adaptive_throughput}
\end{table}

% Compare local-refinement throughput; measurements: raw_results/dg/thougputs.output, lines 38--47 and 140--245.
Across problems containing 262,144 to 3,936,304 active cells, the complete operator sustains $11.12$--$15.55$\,GDoF/s
and the block-Jacobi update sustains $5.11$--$9.90$\,GDoF/s. At four local refinement levels, these rates are $71.5$\%
and $54.9$\% of their uniform-mesh values, respectively, while the solve throughput decreases from $74.4$ to
$39.5$\,MDoF/s. The iteration count increases from 10 to 12 after the first local refinement and remains constant
thereafter. Thus, the continued throughput reduction with increasing local depth is caused by refinement-edge work
rather than by a further loss of multigrid convergence.

%%%%%%%%%%%%%%%%%%%%%%%%
% Conclusion
%%%%%%%%%%%%%%%%%%%%%%%%
\section{Conclusion}
\label{sec:conclusion}

% Paragraph: summary of the shadow-cell method.
We introduced a matrix-free symmetric interior penalty discontinuous Galerkin method for adaptively refined Cartesian
meshes on GPUs. The method relies on auxiliary shadow cells to handle the topological irregularities of local
refinement. By projecting the coarse-cell polynomial onto virtual fine-level children, shadows eliminate non-matching
interfaces. This allows the solver to rely entirely on a uniform, dimensionally split face-evaluation kernel, avoiding
the irregular addressing and warp divergence associated with standard subface integration.

% Paragraph: theoretical properties and solver integration.
We proved that the shadow evaluation reproduces the face integrals and the quadrature of the standard non-matching
formulation exactly. The discrete problem, and hence its approximation properties, are therefore those of the standard
method. To mitigate the memory-bandwidth bottleneck of face integration, we combined a blocked storage layout with a
Hermite-type basis, limiting inter-cell data dependencies to the immediate face layers. Additionally, the primal--dual
formulation integrates the redundant shadow storage into the Krylov loop, eliminating shadow assembly during the
operator application. In the local geometric multigrid preconditioner, the shadows carry the inter-level boundary data
and transfer the coarse-grid residual corrections.

% Paragraph: performance and hardware utilization.
Numerical experiments on an NVIDIA A100 show that the uniform face exchange and blocked data layout reach $81$\% of
peak device memory bandwidth, within $4.1$\% of an arithmetic-free reference using the same access pattern. Tensor-core
utilization further accelerates the coalesced tangential contractions. The approach maintains efficiency under adaptive
refinement: at four local refinement levels the operator retains $71.5$\% of its uniform-mesh throughput, while the
local multigrid preconditioner keeps the iteration count constant. The shadow-cell method thus maps the geometric
irregularity of adaptive mesh refinement onto the concurrency requirements of GPU architectures. Its present scope is
the Cartesian background mesh, on which the kernels stream no geometry; complex domains would be reached by combining
it with an unfitted or shifted-boundary layer~\cite{burman2015cutfem,wichrowski2025matrix}, which we leave to future
work.

\paragraph{Declarations}
The author declares support of two local feline agents (Micro and Conda) running locally alongside language models
(Gemini, Claude, ChatGPT) during text drafting. The final manuscript was audited by the author, who retains full
accountability for all scientific content.

\bibliographystyle{siam}
\bibliography{literature,added_literature}

\end{document}